\newcommand{\handout}[5]{
   \renewcommand{\thepage}{#1-\arabic{page}}
   \noindent
   \begin{center}
   \framebox{
      \vbox{
    \hbox to 5.78in { {\bf (80630) Analytical Methods in Combinatorics and CS}
         \hfill #2 }
       \vspace{4mm}
       \hbox to 5.78in { {\Large \hfill #5  \hfill} }
       \vspace{2mm}
       \hbox to 5.78in { {\it #3 \hfill #4} }
      }
   }
   \end{center}
   \vspace*{4mm}
}
\newtheorem{theorem}{Theorem}
\newtheorem{lemma}[theorem]{Lemma}
\newtheorem{definition}[theorem]{Definition}
\newtheorem{claim}[theorem]{Claim}
\newcommand{\abs}[1]{\mathify{\left| #1 \right|}}
\newenvironment{proof-sketch}{\noindent{\bf Sketch of Proof}\hspace*{1em}}{\qed\bigskip}
\newenvironment{proof-idea}{\noindent{\bf Proof Idea}\hspace*{1em}}{\qed\bigskip}
\newenvironment{proof-of-lemma}[1]{\noindent{\bf Proof of Lemma #1}\hspace*{1em}}{\qed\bigskip}
\newenvironment{proof-attempt}{\noindent{\bf Proof Attempt}\hspace*{1em}}{\qed\bigskip}
\def\fnum@figure{{\bf Figure \thefigure}}
\def\fnum@table{{\bf Table \thetable}}
\long\def\@mycaption#1[#2]#3{\addcontentsline{\csname
  ext@#1\endcsname}{#1}{\protect\numberline{\csname
  the#1\endcsname}{\ignorespaces #2}}\par
  \begingroup
    \@parboxrestore
    \small
    \@makecaption{\csname fnum@#1\endcsname}{\ignorespaces #3}\par
  \endgroup}
\def\mycaption{\refstepcounter\@captype \@dblarg{\@mycaption\@captype}}
\newcommand{\mathify}[1]{\ifmmode{#1}\else\mbox{$#1$}\fi}
\newcommand{\bigO}O
\newcommand{\set}[1]{\mathify{\left\{ #1 \right\}}}
\newcommand{\Var}{{\rm Var}}
\newcommand{\eqdef}{{\stackrel{\rm def}{=}}}
\providecommand{\norm}[1]{\left\| #1 \right\|}
\newcommand{\remove}[1]{}
\newcommand{\comment}[1]{}
\def\reals{\mathbb{R}}
\def\phi{\varphi}
\def\indicator{\mathbf{1}}
\newcommand\Parenth[1]{{\left( {#1} \right)}}
\newcommand\parenth[1]{{\bigl( {#1} \bigr)}}
\newcommand\brac[1]{{\left[ {#1} \right]}}
\newcommand\Brac[1]{{\bigl[ {#1} \bigr]}}
\newtheorem{notation}{Notation}
\renewenvironment{proof}[1][Proof]{
\noindent \textbf{#1: }}{$\Box$
\bigskip}
\newcommand{\ignore}[1]{}
\newcommand{\expect}{\mathbb{E}}
\renewcommand{\Pr}{{\bf Pr}}
\newcommand{\inner}[1]{\left\langle{#1}\right\rangle}
\newcommand{\stab}{\mathcal{S}}
\newcommand{\weight}{\mathcal{W}}
\newcommand{\nfrac}[2]{#1/#2}
\newcommand{\ch}{\omega}
\newcommand{\bool}{\set{0,1}}
\newcommand{\noise}{N}
\begin{document}

\title{Quantitative Relation Between Noise Sensitivity and Influences}

\author{
Nathan Keller\thanks{Faculty of Mathematics and Computer Science, The Weizmann Institute of Science, Rehovot, Israel.
Partially supported by the Adams Fellowship Program of the Israeli Academy of Sciences and Humanities and by
the Koshland Center for Basic Research. E-mail: nathan.keller@weizmann.ac.il} and Guy Kindler\thanks{Incumbent of the Harry and Abe Sherman Lectureship Chair at the Hebrew Univeristy of Jerusalem. Supported by the Israel Science Foundation and by the Binational Science Foundation.
E-mail:gkindler@cs.huji.ac.il} \\
%Einstein Institute of Mathematics, Hebrew University\\
%Jerusalem 91904, Israel\\
%{\tt nkeller@math.huji.ac.il}\\
}
%\thanks{The author was partially supported by the Adams fellowship.}

\maketitle

\begin{abstract}
  A Boolean function $f:\{0,1\}^n \to \{0,1\}$ is said to be noise
  sensitive if inserting a small random error in its argument makes
  the value of the function almost unpredictable. Benjamini, Kalai and
  Schramm~\cite{BKS} showed that if the sum of squares of influences
  in $f$ is close to zero then $f$ must be noise sensitive. We show a
  quantitative version of this result which does not depend on $n$,
  and prove that it is tight for certain parameters\comment{perhaps
    change after doing the unbiased tightness}.  Our results hold also
  for a general product measure $\mu_p$ on the discrete cube, as long
  as $\log 1/p \ll \log n$.  We note that in \cite{BKS}, a
  quantitative relation between the sum of squares of the influences
  and the noise sensitivity was also shown, but only when the sum of
  squares is bounded by $n^{-c}$ for a constant $c$.

  Our results require a generalization of a lemma of Talagrand on the
  Fourier coefficients of monotone Boolean functions.  In order to
  achieve it, we present a considerably shorter proof of Talagrand's
  lemma, which easily generalizes in various directions, including
  non-monotone functions.
\end{abstract}

%\begin{keyword}
%Correlation, influences, Fourier-Walsh expansion
%\end{keyword}

\section{Introduction}
\label{sec:Introduction}

The noise sensitivity of a function $f:\bool^n\to\bool$ is a
measure of how likely its value is to change, when evaluated on a
slightly perturbed input. Noise sensitivity became an important
concept in various areas of research in recent years, with
applications in percolation theory, complexity theory, and
learning theory (see e.g. \cite{BKS}, \cite{Hastad:01},
\cite{BsJaTa:99}). We work with a dual notion of noise
sensitivity, namely noise stability, defined as follows.

\begin{definition}
  For $x\in\bool^n$, the $\epsilon$-noise perturbation of $x$,
  denoted by
  $\noise_\epsilon(x)$, is a distribution obtained from $x$ by
  independently keeping each coordinate of $x$ unchanged with
  probability $1-\epsilon$, and replacing it by a random value with
  probability $\epsilon$. For this purpose we assume that a product
  distribution $\mu$ on the discrete cube $\bool^n$ is defined,
  however we leave it implicit in the notation.

\medskip

\noindent  The noise stability of $f$ is defined by
  \[
  \stab_{\epsilon}(f)\eqdef\mathrm{COV}_{x\sim\mu,\
    y\sim\noise_\epsilon(x)}\brac{f(x),f(y)}.
  \]
\end{definition}

\noindent Roughly saying, a function is noise-sensitive if its noise stability is close to
zero.

\medskip

Another concept that was intensively studied in recent decades is
that of the influences of coordinates on a function.

\begin{definition}
Let $f:\bool^n\to\bool$ be a Boolean function, and let
$i\in\set{1,\ldots,n}$. The influence of the $i$'th coordinate on $f$
is defined as
\[
I_i(f)\eqdef\Pr_{x\sim\mu}\brac{f(x)\neq f(x\oplus e_i)},
\]
where $x\oplus e_i$ is the vector obtained from $x$ by flipping the
$i$'th coordinate.
\end{definition}

Influences were studied in economics for decades, and first found
their way into computer science in \cite{BenorLinial:90}, in the
context of cryptography. The study of influences has numerous
applications in mathematical physics, economics, and various areas
of computer science, such as cryptography, hardness of
approximation, and computational lower-bounds (see e.g.
\cite{LiMaNi:93}, \cite{DinSaf2005}, \cite{mossel-2009}, or the
survey \cite{SafraKalaiSurvey}).

\paragraph{Relations between influences and noise sensitivity.} The
noise sensitivity of a function and its influences both measure how
likely it is to change its value when the input is slightly perturbed.
It makes sense to study the relations between these concepts. Perhaps
counterintuitively, it turns out that functions with very low
influences must be very sensitive to noise. This phenomenon was first
shown in a paper by Benjamini, Kalai, and Schramm~\cite{BKS}. They
proved the following theorem:
\begin{theorem}[\cite{BKS}]
  Let $\set{f_m:\bool^{n_m}\to\bool}_{m=1,2,\dots}$ be a sequence of Boolean functions,
  such that
\[
\sum_{i=1}^{n_m}I_i(f)^2\xrightarrow{m\rightarrow\infty}0.
\]
Then for any $\epsilon>0$, $\stab_{\epsilon}(f_m)\xrightarrow{m\to\infty}0$.
\end{theorem}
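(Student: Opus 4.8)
The plan is to pass to the Fourier--Walsh expansion. Identifying $\bool$ with $\{-1,1\}$, write $f=\sum_{S}\hat f(S)\chi_S$ with $\chi_S(x)=\prod_{i\in S}x_i$. A direct computation with the noise operator shows $\stab_\epsilon(f)=\sum_{S\neq\emptyset}(1-\epsilon)^{|S|}\hat f(S)^2$, while $I_i(f)=\sum_{S\ni i}\hat f(S)^2$, so the hypothesis reads $\sum_i\big(\sum_{S\ni i}\hat f(S)^2\big)^2\to 0$. Set $W_k(f)=\sum_{|S|=k}\hat f(S)^2$, and for a threshold $K=K(\epsilon,\eta)$ to be fixed later split
\[
\stab_\epsilon(f)=\sum_{k=1}^{K}(1-\epsilon)^{k}W_k(f)+\sum_{k>K}(1-\epsilon)^{k}W_k(f).
\]
The tail is at most $(1-\epsilon)^{K}\sum_kW_k(f)\le(1-\epsilon)^{K}$, which is $<\eta$ once $K$ is large (this is the only place $\epsilon>0$ is used, and $K$ depends only on $\epsilon,\eta$, not on $n$). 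So everything reduces to one statement: for each fixed $k$, $W_k(f)$ is bounded by a quantity that tends to $0$ with $\sum_i I_i(f)^2$; for a fixed $K$ the finite sum over $k\le K$ then also tends to $0$, finishing the proof.

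To attack this I would use the discrete derivatives $D_if(x)=\tfrac12\big(f(x^{i\to 1})-f(x^{i\to 0})\big)$, which take values in $\{-1,0,1\}$; their Fourier expansion is $D_if=\sum_{S\ni i}\hat f(S)\chi_{S\setminus\{i\}}$, hence $\|D_if\|_2^2=I_i(f)$ and, since $|D_if|$ is $\{0,1\}$-valued, $\|D_if\|_p^p=I_i(f)$ for every $p>0$. Double counting the pairs $(i,S)$ with $i\in S$, $|S|=k$ gives $k\,W_k(f)=\sum_i\|(D_if)^{=k-1}\|_2^2$, where $g^{=\ell}$ denotes the degree-$\ell$ homogeneous part. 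Now hypercontractivity enters: for $1<p\le 2$ with conjugate $p'$, Hölder's inequality together with the Bonami--Beckner bound $\|g^{=\ell}\|_{p'}\le(p'-1)^{\ell/2}\|g^{=\ell}\|_2$ yields $\|(D_if)^{=k-1}\|_2\le(p'-1)^{(k-1)/2}\|D_if\|_p=(p'-1)^{(k-1)/2}I_i(f)^{1/p}$, and hence $k\,W_k(f)\le(p'-1)^{k-1}\sum_i I_i(f)^{2/p}$; optimizing $p$ per coordinate sharpens this to the ``level-$k$'' form $k\,W_k(f)\le\sum_i\big(C_k\ln(1/I_i(f))\big)^{k-1}I_i(f)^2$.

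The hard part is that these per-coordinate estimates are not quite enough: a function with an enormous number of coordinates of very small influence can have $\sum_i I_i(f)^{2-\eta}$, or $\sum_i I_i(f)^{2}\ln(1/I_i(f))$, bounded away from $0$ even while $\sum_i I_i(f)^2\to 0$, so summing the hypercontractive estimate coordinate by coordinate is too lossy. What one really needs is a bound on $W_k(f)$ controlled by $\sum_i I_i(f)^2$ \emph{itself} --- up to a factor depending only on $k$, and perhaps a logarithmic factor in $1/\sum_i I_i(f)^2$ --- and obtaining this forces one to use the interaction \emph{between} coordinates rather than treating each $D_if$ in isolation; this is exactly the role of Talagrand's lemma on the Fourier coefficients of monotone functions, and of the generalization established in this paper, which dispenses with monotonicity. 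Granted a bound of the shape $W_k(f)\le c_k\,h\big(\sum_i I_i(f)^2\big)$ with $h(t)\to 0$ as $t\to 0$, the theorem follows immediately: given $\eta>0$, fix $K=K(\epsilon,\eta)$ as above, then choose $m$ large enough that $\sum_{k=1}^{K}(1-\epsilon)^{k}c_k\,h\big(\sum_i I_i(f_m)^2\big)<\eta$, so that $\stab_\epsilon(f_m)<2\eta$. (For a general product measure $\mu_p$ one runs the same argument with the $p$-biased Fourier expansion and the matching hypercontractive inequality, which is where a condition relating $p$ and $n$ enters.)
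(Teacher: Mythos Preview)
Your proposal is correct and follows essentially the same route as the paper. The paper's proof of the quantitative version (Theorem~\ref{thm:main}/Theorem~\ref{thm:biased-main}) proceeds exactly as you outline: write $\stab_\epsilon(f)$ via the Fourier formula, split at a level $L$, bound the high-degree tail by $(1-\epsilon)^L$, and control each low-degree weight $W_d(f)$ via the generalized Talagrand lemma (Lemma~\ref{Lemma-Main}/Lemma~\ref{Lemma-biased-main}), which gives precisely the bound you ask for, $W_d(f)\le c_d\,\weight(f)\,(\log(1/\weight(f)))^{d-1}$ with $\weight(f)=\tfrac14\sum_i I_i(f)^2$. Your diagnosis that a per-coordinate hypercontractive estimate is too lossy --- because the logarithm sits on each $I_i(f)$ rather than on $\sum_i I_i(f)^2$ --- is exactly the obstacle the paper's lemma is designed to overcome, and your detour through that failed attempt, while not needed for the proof, is an accurate explanation of why the Talagrand-type lemma is the real content here.
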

The BKS theorem was proved in \cite{BKS} only with respect to the
uniform measure on the Boolean cube, and the case of highly biased
product measures was left open. However, for some applications, such as
in the study of threshold phenomena, one is often interested in biased
measures. Moreover, the BKS theorem is qualitative, and does now show
a concrete relation between the influences of a function and its noise
stability (a quantitative relation was shown in \cite{BKS}, but only
for the case where for a function $f:\bool^n\to\bool$, the sum of
squares of the influences is inverse polynomially small in $n$).

\subsection{Our results}

In this paper we show a quantitative version of the BKS theorem. With
respect to the uniform measure, we prove that
\begin{theorem}\label{thm:main}
  There exists a constant $C>.234$ such that the following holds. Let
  $f:\bool^{n} \rightarrow \bool$, and denote
\[
\weight(f)=\frac14\cdot\sum_{i=1}^{n} I_i(f)^2.
\]
Then \[
\stab_\epsilon(f) \leq 20\cdot
\weight(f)^{C\cdot \epsilon}.
\]
\end{theorem}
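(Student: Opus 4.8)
The plan is to pass to the Fourier spectrum. Writing $f = \sum_{S} \hat f(S)\chi_S$, we have the standard identity $\stab_\epsilon(f) = \sum_{S \neq \emptyset} (1-\epsilon)^{|S|}\hat f(S)^2$. The contribution of large levels is automatically small: for any threshold $k$, $\sum_{|S| \ge k}(1-\epsilon)^{|S|}\hat f(S)^2 \le (1-\epsilon)^k \le e^{-\epsilon k}$. So the whole game is to bound the \emph{low-level} Fourier weight $W^{\le k}(f) := \sum_{1 \le |S| \le k}\hat f(S)^2$ in terms of $\weight(f)$, and then optimize the choice of $k$.

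The key tool for this is the generalized Talagrand lemma announced in the abstract (the statement that the paper will have proved before this theorem). In its usual form, Talagrand's lemma says $\sum_S \hat f(S)^2 / |S| \lesssim \bigl(\sum_i \hat f(\{i\})^2\bigr)/\log(1/\sum_i \hat f(\{i\})^2)$ — roughly, that the level-$1$ weight controls a $1/|S|$-weighted sum over the whole spectrum. I would use the (non-monotone) generalization to get a bound of the shape $W^{\le k}(f) \lesssim k \cdot \weight(f)^{c}$ for a suitable absolute constant $c > 0$; the factor $k$ comes from trading the $1/|S| \ge 1/k$ weighting against the unweighted sum $W^{\le k}$, and the power $\weight(f)^{c}$ comes from the logarithmic denominator in Talagrand's bound (since $\sum_i \hat f(\{i\})^2 \le \weight(f)$ by $\hat f(\{i\})^2 \le I_i(f)^2$, wait — actually $\hat f(\{i\}) = \Ee[f\chi_i]$ while $I_i(f) = \sum_{S \ni i}\hat f(S)^2$, so one uses $\hat f(\{i\})^2 \le I_i(f)^2$, giving $\sum_i \hat f(\{i\})^2 \le \sum_i I_i(f)^2 = 4\weight(f)$).

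Combining the two pieces: $\stab_\epsilon(f) \le W^{\le k}(f) + e^{-\epsilon k} \lesssim k\cdot\weight(f)^{c} + e^{-\epsilon k}$. Now choose $k$ to balance these — roughly $k \asymp \frac{c}{\epsilon}\log(1/\weight(f))$ — so that $e^{-\epsilon k} \asymp \weight(f)^{c}$, and the polynomial-in-$k$ prefactor gets absorbed into a slightly smaller exponent, yielding $\stab_\epsilon(f) \le 20\,\weight(f)^{C\epsilon}$ for the claimed $C > 0.234$. Tracking the constants carefully through this optimization — in particular squeezing out the bound $C > 0.234$ and the leading constant $20$ — is where the bookkeeping lives; the constant $.234$ presumably reflects the exact constant in the generalized Talagrand lemma times the loss in the balancing step.

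The main obstacle is the generalized Talagrand lemma itself and getting a clean, $n$-independent bound of exactly the form $W^{\le k} \le (\text{poly in } k)\cdot\weight(f)^{c}$ out of it — the original Talagrand argument uses monotonicity (via the edge-boundary / $\sum_i \hat f(\{i\}) = $ edge-boundary size interpretation and hypercontractivity on each influence), so the new short proof advertised in the abstract must replace that with something that works for arbitrary $f$ and still produces the logarithmic gain. Once that lemma is in hand in the right quantitative form, the rest is the two-line spectral split above plus an optimization of $k$.
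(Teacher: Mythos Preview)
Your high-level architecture matches the paper exactly: write $\stab_\epsilon(f)=\sum_{S\neq\emptyset}(1-\epsilon)^{|S|}\hat f(S)^2$, split at a cutoff $L$, bound the tail by $(1-\epsilon)^L$, bound the head via the generalized Talagrand lemma, and optimize $L\asymp\alpha\log(1/\weight)$. So the skeleton is right.

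The gap is in the shape of the generalized Talagrand bound you invoke. The form you quote, $\sum_S \hat f(S)^2/|S|\lesssim M/\log(1/M)$ with $M=\sum_i\hat f(\{i\})^2$, is not the lemma the paper proves (nor is it true in that strength). The paper's lemma is level-by-level and has logs in the \emph{numerator}, with a power that grows with the level:
\[
\sum_{|S|=d}\hat f(S)^2 \;\le\; \frac{5e}{d}\Bigl(\frac{2e}{d-1}\Bigr)^{d-1}\weight(f)\,\bigl(\log(d/\weight(f))\bigr)^{d-1}.
\]
Consequently the intermediate claim $W^{\le k}(f)\lesssim k\cdot\weight(f)^c$ for an \emph{absolute} $c>0$ is false: it would yield $\stab_\epsilon(f)\lesssim\weight(f)^{c-o(1)}$ uniformly in $\epsilon$, contradicted by tribes (for which $\stab_\epsilon\approx\weight^{\Theta(\epsilon)}$). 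What actually happens is that the level-$d$ bounds form a geometrically increasing sequence (the paper checks the ratio is $\ge 2$), so the head is dominated by the single term at $d=L$, of size roughly $\weight\cdot\bigl(2e\log(1/\weight)/L\bigr)^{L-1}$. With $L=\alpha\log(1/\weight)$ this is $\weight^{\,1-\alpha\log(2e/\alpha)}$; balancing against $(1-\epsilon)^L\le\weight^{\alpha\epsilon}$ forces $\alpha=1/(\epsilon+\log(2e)+3\log\log(2e))$, and the constant $0.234$ is exactly $\alpha(1)$. So the exponent $C\epsilon$ is \emph{intrinsic} to the level-by-level growth, not an artifact of loose bookkeeping.
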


The main technical tool used in the proof of Theorem~\ref{thm:main},
and also in the original qualitative result of \cite{BKS}, is a
generalization of a lemma by Talagrand~\cite{Talagrand:96}. Talagrand's
result considers the Fourier-Walsh expansion of a monotone Boolean function,
and bounds its weight on second-level Fourier coefficients in terms of
its weight on first level coefficients. This lemma is of independent
interest, and was used by Talagrand to estimate the correlation
between monotone families\cite{Talagrand:96}, and the size of the
boundary of subsets of the discrete cube \cite{Talagrand:97}. The
generalization gives a similar bound on the weight on $d$-level coefficients.

While in the paper of \cite{BKS} only a qualitative estimate was given in
the generalization of Talagrand's lemma, the main technical tool in
our proof is a quantitative version of it. To obtain it, we simplify
the proof of Talagrand's lemma in a way which makes its generalization
quite simple and straightforward. Our result for the uniform measure is the following:

\begin{lemma}
  For all $d \geq 2$, and for every function $f:\{0,1\}^n \rightarrow
  \bool$ such that
\begin{equation}
  \weight(f) \leq
\exp(-2(d-1))
\label{Eq:Restriction3}
\end{equation}
(where $\weight(f)$ is as in Theorem~\ref{thm:main}), we have
\begin{equation}
\sum_{|S|=d} \hat f(S)^2 \leq \frac{5e}d \cdot \Parenth{
      \frac{2e}{d-1}}^{d-1}\cdot\weight(f)\cdot \bigl(\log\Parenth{\nfrac{d}{\weight(f)}}\bigr)^{d-1}.
\end{equation}
\label{Lemma-Main}
\end{lemma}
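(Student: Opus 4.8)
The plan is to reduce the claim to a ``level-$(d-1)$ inequality'' applied to each discrete derivative of $f$, and then to carry out a summation over the coordinates that uses the hypothesis $\weight(f)\le\exp(-2(d-1))$ decisively. For a coordinate $i$ put $D_i f(x)=f(x^{i\to 1})-f(x^{i\to 0})$. This is a $\{-1,0,1\}$-valued function, so $\norm{D_i f}_q^q=\E[(D_i f)^2]=I_i(f)$ for every $q\ge 1$, and its Fourier expansion satisfies $\widehat{D_i f}(T)=-2\,\hat f(T\cup\{i\})$ for $i\notin T$. Writing each $d$-element set $S$ as $T\cup\{i\}$ in $d$ ways yields $\sum_{|S|=d}\hat f(S)^2=\frac1{4d}\sum_{i=1}^{n}\sum_{|T|=d-1}\widehat{D_i f}(T)^2$. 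Note that nothing here requires monotonicity; Talagrand's original lemma is essentially the monotone $d=2$ case of what we are proving, and this derivative reduction is precisely the place where the monotonicity in his argument can be dropped.

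\textbf{Step 2: a level-$(d-1)$ inequality.} For a $\{-1,0,1\}$-valued $g$ with $\E[g^2]=\alpha$ one has $\norm{g}_{1+\rho^2}=\alpha^{1/(1+\rho^2)}$, so the Bonami--Beckner inequality $\norm{T_\rho g}_2\le\norm{g}_{1+\rho^2}$ gives $\rho^{2(d-1)}\sum_{|T|=d-1}\hat g(T)^2\le\alpha^{2/(1+\rho^2)}$ for every $\rho\in(0,1)$. Taking $\rho^2$ of order $(d-1)/(2\log(1/\alpha))$ — legitimate because $\weight(f)\le\exp(-2(d-1))$ forces $\alpha=I_i(f)\le 2\sqrt{\weight(f)}\le 2e^{-(d-1)}$, hence $\log(1/\alpha)\gtrsim d-1$ — and optimizing yields $\sum_{|T|=d-1}\hat g(T)^2\le\bigl(\tfrac{2e\log(1/\alpha)}{d-1}\bigr)^{d-1}\alpha^2$. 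Specialized to $d=2$, this one-line hypercontractive estimate is the promised short proof of Talagrand's lemma, and it is what makes the dependence on $d$ transparent. Combining with Step 1, $\sum_{|S|=d}\hat f(S)^2\le\frac1{4d}\bigl(\tfrac{2e}{d-1}\bigr)^{d-1}\sum_{i=1}^{n}I_i(f)^2\bigl(\log(1/I_i(f))\bigr)^{d-1}$, so it remains to prove $\sum_i I_i(f)^2\bigl(\log(1/I_i(f))\bigr)^{d-1}\le 20\,\weight(f)\bigl(\log(d/\weight(f))\bigr)^{d-1}$.

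\textbf{Step 3: the summation.} Since $\sum_i I_i(f)^2=4\weight(f)$, the last display is a statement about the weighted average of $(\log(1/I_i(f)))^{d-1}$ under the probabilities $\nu(i)=I_i(f)^2/(4\weight(f))$: one needs this average to be within a constant factor of $(\log(d/\weight(f)))^{d-1}$. The naive bound $\sum_i I_i^2(\log(1/I_i))^{d-1}\le(\sum_i I_i^2)\max_i(\log(1/I_i))^{d-1}$ is worthless, and in fact the quantity is \emph{not} bounded in terms of $\sum_i I_i^2$ alone — spreading the influence-mass over many coordinates of tiny influence makes it blow up. The hypothesis is what rescues it. First, $I_i(f)^2\le 4\weight(f)$ gives $\log(1/I_i(f))\ge\tfrac12\log\bigl(1/(4\weight(f))\bigr)=:X_0$ for every $i$, and $X_0\ge d-1-\log 2$ is comfortably large, while $\log(d/\weight(f))=2X_0+\log(4d)\ge 2X_0$. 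Second, $\nu$ has an exponentially decaying tail, $\nu\bigl(\log(1/I_i(f))>x\bigr)\le\tfrac{\mathrm{as}(f)}{4\weight(f)}\,e^{-x}$, because $\sum_{i:\,I_i<e^{-x}}I_i^2\le e^{-x}\sum_i I_i=e^{-x}\,\mathrm{as}(f)$; integrating $(d-1)x^{d-2}$ against this tail (and using $X_0>d-2$) bounds the weighted average by $X_0^{d-1}\bigl(1+O(d)\cdot\mathrm{as}(f)/\sqrt{\weight(f)}\bigr)$, which stays below $5\,(2X_0)^{d-1}=5\cdot 2^{d-1}X_0^{d-1}\le 5(\log(d/\weight(f)))^{d-1}$ provided $\mathrm{as}(f)$ is not too large compared with $\sqrt{\weight(f)}$.

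\textbf{Main obstacle.} The crux is this last summation: one must quantitatively bound how spread out the influences of $f$ can be given $\weight(f)$ small, i.e.\ control $\mathrm{as}(f)$ (equivalently, the effective number of relevant variables) in terms of $\weight(f)$ and $d$, and then check that all the constants close up to the stated $\frac{5e}{d}\bigl(\frac{2e}{d-1}\bigr)^{d-1}$. The crude tail estimate above already works for the extremal-looking examples (disjoint-AND formulas, Tribes-type functions), but turning it into a clean inequality with the right constant — presumably by case-splitting on the size of $\mathrm{as}(f)$, using the tail bound in one regime and the fact that the $\nu$-mass concentrates near $X_0$ in the other — is the delicate part, and the step I expect to require the most care. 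The $\mu_p$-biased version should follow the same three-step plan with the $p$-biased hypercontractive inequality replacing Bonami--Beckner, at the cost of additional bookkeeping in the ranges of $p$ and $d$ but with no new idea.
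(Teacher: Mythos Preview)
Your Steps 1--2 are correct and yield the clean intermediate bound
\[
\sum_{|S|=d}\hat f(S)^2\;\le\;\frac{1}{4d}\Bigl(\frac{2e}{d-1}\Bigr)^{d-1}\sum_{i=1}^{n}I_i(f)^2\bigl(\log(1/I_i(f))\bigr)^{d-1}.
\]
The gap is Step~3: the inequality $\sum_i I_i(f)^2\bigl(\log(1/I_i(f))\bigr)^{d-1}\le 20\,\weight(f)\bigl(\log(d/\weight(f))\bigr)^{d-1}$ is \emph{false} under the sole hypothesis $\weight(f)\le e^{-2(d-1)}$, so the reduction cannot be completed. A concrete counterexample is any symmetric threshold $f=\mathbf{1}\bigl[\sum_i x_i\ge k\bigr]$ on $n$ variables with $k$ tuned so that $\weight(f)$ equals a fixed $W\le e^{-2(d-1)}$: then every $I_i(f)=2\sqrt{W/n}$, the left side equals $4W\bigl(\tfrac12\log(n/4W)\bigr)^{d-1}\to\infty$ as $n\to\infty$, while the right side stays fixed. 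Your proposed rescue via the total influence $\mathrm{as}(f)=\sum_i I_i(f)$ fails on the same example, since $\mathrm{as}(f)=2\sqrt{Wn}\to\infty$; there is simply no bound on $\mathrm{as}(f)$ in terms of $\weight(f)$, so the ``case-split on $\mathrm{as}(f)$'' has an unbounded case.

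The loss occurs precisely because you optimize the hypercontractivity parameter for each $D_i f$ separately, producing the per-coordinate factors $\log(1/I_i)$ that then have to be summed. The paper never decouples into single coordinates. It fixes a partition $\{I,J\}$ and, for each $j\in J$, bounds $\sum_{T\subset I,\,|T|=d-1}\hat f(T,j)^2$ by a head term $2t_0^2\,I'_j(f)^2$ plus a tail term proportional to $\expect_{x\in\{0,1\}^I}\bigl[(\expect_{y}[\omega_j(x,y)f(x,y)])^2\bigr]$. Summing the tail terms over $j\in J$ and applying Parseval to the restricted Boolean function $f_x:\{0,1\}^J\to\{0,1\}$ collapses them to a single scalar independent of the influences; a \emph{single} $t_0$ is then chosen to balance this scalar against $\sum_{j\in J}I'_j(f)^2$, giving a bound with $\log\bigl(1/\sum_{j\in J}I'_j(f)^2\bigr)$ rather than individual $\log(1/I_i)$'s. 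Finally, averaging over a random $J$ (each coordinate in $J$ with probability $1/d$) and applying Jensen to the concave map $x\mapsto x(\log(1/x))^{d-1}$ lands directly on $\weight(f)\bigl(\log(d/\weight(f))\bigr)^{d-1}$. Note that this uses the Booleanity of $f$ itself (through $\|f_x\|_2\le 1$), not merely of $D_i f$; that extra information is exactly what your approach forfeits. If you want to salvage the derivative viewpoint, you must replace the per-coordinate level inequality by a joint argument of this kind --- the per-coordinate optimization is what makes Step~3 unrescuable.
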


Lemma~\ref{Lemma-Main}, as well as Theorem~\ref{thm:main}, hold also
for functions into the segment $[-1,1]$, if one appropriately extends
the definition of the influence. Specifically, one should define
$I_i(f)=\norm{f(x^{i\leftarrow0})-f(x^{i\leftarrow1})}_1$ where
$x^{i\leftarrow a}$ is the vector obtained from $x$ by inserting $a$
in the $i$'th coordinate. This holds also for the biased case,
discussed below. For simplicity, we assume that $f$ is Boolean in the
proofs.

\medskip

We note that Talagrand also proves a ``decoupled'' version of his
lemma. While we do not need a decoupled version for the proof of
Theorem~\ref{thm:main}, we prove one for the sake of completeness in
the Appendix.

\paragraph{Biased measure.}
In the study of threshold phenomena, and for other applications, often
one is interested in biased measures rather than the uniform measure
over the discrete cube. Once the proof of Talagrand's lemma is
simplified, it becomes easier to apply it also for biased
measures. Below are our analogous results with respect to the p-biased
measure on the discrete cube. The coefficients below are with respect
to the ``$p$-biased'' Fourier-Walsh expansion (see
Section~\ref{Sec:Preliminaries}).

\begin{lemma}\label{Lemma-biased-main}
Let $f:\bool^{n} \rightarrow \bool$, and denote
  \[
  \weight(f)=p(1-p)\cdot\sum_{i=1}^{n} I_i(f)^2.
  \]
For all $d \geq 2$, if
  \begin{equation}
    \weight(f) \leq
    \exp(-2(d-1)),
    \label{Eq:Restriction3}
  \end{equation}
then we have
\begin{equation}\label{eq:29}
\sum_{|S|=d} \hat f(S)^2 \leq \frac{5e}d \cdot \Parenth{\frac{2B(p)\cdot
      e}{d-1}}^{d-1}\cdot\weight(f)\cdot \Parenth{\log\Parenth{\frac{d}{\weight(f)}}}^{d-1},
\end{equation}
where $B(p)$  is the hypercontractivity constant defined in
Section~\ref{Sec:Preliminaries}.
\end{lemma}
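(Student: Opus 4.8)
The plan is to run the proof of Lemma~\ref{Lemma-Main} essentially verbatim, changing only two things: the uniform Fourier--Walsh expansion is replaced by the $p$-biased one, and the uniform hypercontractive inequality is replaced by its $\mu_p$-analogue, which is the sole source of the constant $B(p)$. Write $\varphi_i(x)=(x_i-p)/\sqrt{p(1-p)}$ for the $p$-biased characters, set $W_k(g):=\sum_{|U|=k}\hat g(U)^2$, and for $i\notin T$ let $D_if$ be the discrete derivative determined by $\widehat{D_if}(T)=\hat f(T\cup\{i\})$. Three ingredients drive the argument. (i) The expansion identity $\sum_{|S|=d}\hat f(S)^2=\tfrac1d\sum_{i=1}^n\sum_{|T|=d-1}\hat f(T\cup\{i\})^2=\tfrac1d\sum_i W_{d-1}(D_if)$, which is insensitive to the underlying measure. (ii) For Boolean $f$, a direct computation from $\varphi_i$ gives $|D_if|=\sqrt{p(1-p)}\,\indicator_{A_i}$ with $A_i=\{x:f(x)\ne f(x\oplus e_i)\}$ the pivotal set (of $\mu_p$-measure $I_i(f)$), so $\|D_if\|_q=\sqrt{p(1-p)}\,I_i(f)^{1/q}$ for every $q\ge1$; the scalar $\sqrt{p(1-p)}$ is exactly what makes $p(1-p)\sum_iI_i(f)^2$ the correct normalization of $\weight$ in the biased case. (iii) A level-$(d-1)$ Fourier-weight estimate valid for arbitrary (not necessarily monotone or Boolean) functions.

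First I would record the $p$-biased form of (iii): Hölder's inequality $W_{d-1}(g)=\langle g,g^{=d-1}\rangle\le\|g\|_q\,\|g^{=d-1}\|_{q'}$ (with $1/q+1/q'=1$, $1<q<2$) combined with the $\mu_p$-hypercontractive bound $\|g^{=d-1}\|_{q'}\le\bigl(B(p)(q'-1)\bigr)^{(d-1)/2}\|g^{=d-1}\|_2$ from Section~\ref{Sec:Preliminaries} gives $W_{d-1}(g)\le\|g\|_q^2\,\bigl(B(p)/(q-1)\bigr)^{d-1}$; the only change from the uniform estimate is the extra factor $B(p)^{d-1}$. Substituting $g=D_if$, so $\|D_if\|_q^2=p(1-p)\,I_i^{2/q}$, and optimizing over $q$ as in the uniform proof of Lemma~\ref{Lemma-Main} — the optimum, of order $q-1\sim(d-1)/\log(1/I_i)$, is admissible because the hypothesis $\weight(f)\le\exp(-2(d-1))$ forces $I_i\le2\exp(-(d-1))$, hence $\log(1/I_i)\ge d-1-\log2$, for every $i$ — then gives a per-coordinate bound whose $i$-sum, after the common factor $p(1-p)$ is pulled out so that $\sum_iI_i^2$ reassembles into $\weight(f)$, is controlled by the \emph{same} summation/convexity step as in the uniform case. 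That step is a purely numerical argument about the list $\{I_i(f)\}_i$ and the scalar $\weight(f)$ — it never sees the measure — so it carries over unchanged and delivers the factor $\bigl(\log(d/\weight(f))\bigr)^{d-1}$. Thus \eqref{eq:29} drops out: the $d-1$ invocations of hypercontractivity turn $\bigl(2e/(d-1)\bigr)^{d-1}$ into $\bigl(2B(p)e/(d-1)\bigr)^{d-1}$, and nothing else, the leading constant $5e$ included, is altered.

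The main obstacle is the hypercontractivity bookkeeping. $\mu_p$-hypercontractivity degrades as $\min(p,1-p)\to0$ and, more importantly, holds only in a restricted parameter range — the degree-$(d-1)$ estimate above is valid only when $q'-1$ (equivalently $1/(q-1)$) does not exceed what $B(p)$ tolerates. One therefore has to check that the optimal exponent produced by the uniform argument, which scales like $(d-1)/\log(d/\weight(f))$ for $q-1$, respects this restriction; this is the one place where the biased case is genuinely more delicate than the uniform one, and it is once again the hypothesis on $\weight(f)$ that rescues it. A secondary but necessary chore is to verify $|D_if|=\sqrt{p(1-p)}\,\indicator_{A_i}$ and the induced $L^q$-norms straight from the definition of $\varphi_i$, and to track the numerical constants through the optimization so that the bound emerges in exactly the $\tfrac{5e}{d}$, $\bigl(2B(p)e/(d-1)\bigr)^{d-1}$ shape of the statement.
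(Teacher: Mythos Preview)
Your approach has a genuine gap at the ``summation/convexity step''. After the per-coordinate optimization you obtain, up to the constants you track,
\[
W_{d-1}(D_if)\;\le\; C\, p(1-p)\,I_i(f)^2\Bigl(\log\frac{1}{I_i(f)}\Bigr)^{d-1},
\]
and hence $\sum_{|S|=d}\hat f(S)^2 \le \frac{C}{d}\sum_{i} p(1-p)\,I_i(f)^2\bigl(\log(1/I_i(f))\bigr)^{d-1}$. But this is \emph{not} controlled by $\weight(f)\bigl(\log(d/\weight(f))\bigr)^{d-1}$: the map $\phi(x)=x(\log(1/x))^{d-1}$ is concave on $[0,e^{-(d-1)}]$ with $\phi(0)=0$, hence subadditive, so $\sum_i\phi(a_i)\ge\phi\bigl(\sum_i a_i\bigr)$ --- the wrong direction for what you need. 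Concretely, if $I_i(f)^2=\weight(f)/(n\,p(1-p))$ for every $i$, your bound is of order $\frac{1}{d}\,\weight(f)\cdot(\log n)^{d-1}$, which diverges with $n$ while the target stays fixed. A single global $q$ does not help either: with $q-1$ of order $(d-1)/\log(1/\weight(f))$ one must control $\sum_i I_i(f)^{2/q}=\sum_i\bigl(I_i(f)^2\bigr)^{1/q}$ with $1/q\in(1/2,1)$, and given only the value of $\sum_i I_i(f)^2$ this sum is again unbounded in $n$. (Note also that in this paper Lemma~\ref{Lemma-Main} has no separate proof; it is derived as the $p=1/2$ case of Lemma~\ref{Lemma-biased-main}, so there is no independent ``uniform proof'' to transport.)

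The paper's argument avoids this by a mechanism your outline lacks. Rather than treating each $D_if$ in isolation, it partitions $[n]=I\cup J$, writes the relevant sum as $\bigl\langle \sum_{j\in J}f'_j\,\omega_j,\,f\bigr\rangle$ with $f'_j$ a degree-$(d-1)$ polynomial over $\{0,1\}^I$, and controls the tail of the normalized $f_j$ via the large-deviation bound of Theorem~\ref{Thm:Oles}. The decisive step is the Parseval estimate $\sum_{j\in J}\widehat{f_x}(\{j\})^2\le1$ for every fixed $x\in\{0,1\}^I$ (Equation~(\ref{eq:14})), a global constraint that couples all $j\in J$ together. After optimizing the cutoff $t_0$, the quantity $\sum_{j\in J}I'_j(f)^2$ already sits \emph{inside} the logarithm in~(\ref{newEq3.3.1}); the only convexity used afterwards is Jensen for the concave $x\mapsto x(\log(1/x))^{d-1}$ applied to the random variable $\sum_{j\in J}I'_j(f)^2$ over the random partition, and there Jensen points the right way.
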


We note that a bound slightly weaker than in
Lemma~\ref{Lemma-biased-main} can be obtained from
Lemma~\ref{Lemma-Main} by a general reduction technique, as was
observed in \cite{keller10}. However we prove
Lemma~\ref{Lemma-biased-main} directly, and Lemma~\ref{Lemma-Main}
follows as an immediate corollary.

\medskip\noindent
Using Lemma~\ref{Lemma-biased-main} we can prove an analogue of
Theorem~\ref{thm:main} for the case of biased measure.

\begin{theorem}\label{thm:biased-main}
  For all $d \geq 2$, and for every function $f:\{0,1\}^n \rightarrow
  \bool$ the following holds. Denoting
  $\weight(f)=p(1-p)\cdot\sum_{i=1}^{n} I_i(f)^2$,
we have
\begin{equation}\label{eq:290}
\stab_\epsilon(f) \leq (6e+1)
\weight(f)^{\alpha(\epsilon)\cdot \epsilon},
\end{equation}
where \[\alpha(\epsilon)=\frac{1}{\epsilon+\log(2B(p)e) +
  3\log\log(2B(p)e)},\]
and $B(p)$ is the hypercontractivity constant defined in
Section~\ref{Sec:Preliminaries}.
\end{theorem}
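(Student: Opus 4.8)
\noindent The plan is to work in the $p$-biased Fourier--Walsh basis, where $\stab_\epsilon(f)=\sum_{d\ge 1}(1-\epsilon)^{d}W_d(f)$ with $W_d(f)=\sum_{|S|=d}\hat f(S)^2$, and to estimate each level by whatever bound is sharpest there: a direct influence inequality at level $1$, Lemma~\ref{Lemma-biased-main} at intermediate levels, and the trivial bound $W_d\le \Var(f)\le\tfrac14$ (times $(1-\epsilon)^{d}$) at high levels. First I would clear away the degenerate cases. Write $w=\weight(f)$ and $L=\log(1/w)$. If $w\ge e^{-2}$ then $(6e+1)\,w^{\alpha(\epsilon)\epsilon}>(6e+1)e^{-2\alpha(\epsilon)\epsilon}>\tfrac14\ge\stab_\epsilon(f)$, since $\alpha(\epsilon)\epsilon<1$ and $\stab_\epsilon(f)\le\Var(f)\le\tfrac14$; more generally the asserted inequality is immediate from $\stab_\epsilon(f)\le\tfrac14$ whenever $L$ is below an explicit multiple of $1/(\alpha(\epsilon)\epsilon)$, so we may assume $L$ is as large as we need. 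At level $1$, a standard computation in the $p$-biased basis gives $|\hat f(\{i\})|\le\sqrt{p(1-p)}\,I_i(f)$, whence $W_1(f)\le p(1-p)\sum_iI_i(f)^2=w\le w^{\alpha(\epsilon)\epsilon}$.

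Next, fix the cutoff $d_1=\lceil\alpha(\epsilon)L\rceil$. Since $\alpha(\epsilon)<\tfrac12$ we have $d_1\le 1+\tfrac12\log(1/w)$, so Lemma~\ref{Lemma-biased-main} may be applied for every $2\le d\le d_1$; and since $d_1\ge\alpha(\epsilon)L$, the high-level part is immediate: $\sum_{d>d_1}(1-\epsilon)^{d}W_d\le(1-\epsilon)^{d_1}\Var(f)\le\tfrac14 e^{-\epsilon d_1}\le\tfrac14\,w^{\alpha(\epsilon)\epsilon}$. The heart of the argument is the middle sum $\sum_{d=2}^{d_1}(1-\epsilon)^{d}W_d$. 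Substituting Lemma~\ref{Lemma-biased-main} and putting $m=d-1$, the $d$-th term is at most $\frac{5e(1-\epsilon)w}{m+1}\bigl(\tfrac{2B(p)e\,(1-\epsilon)\log(d/w)}{m}\bigr)^{m}$; using $\log(d/w)=L+\log d$ and the fact that $d\le d_1$ is small compared with $L$, this is a unimodal sequence in $m$ whose maximum over $1\le m\le d_1-1$ sits at $m=d_1-1$ when $\epsilon$ is bounded away from $1$ and, for $\epsilon$ near $1$, at an interior point $m^{\ast}\approx 2B(p)(1-\epsilon)L$. In the first case the maximizing term equals, up to a polynomial-in-$L$ factor, $w\cdot\bigl(2B(p)e\,e^{-\epsilon}/\alpha(\epsilon)\bigr)^{\alpha(\epsilon)L}=w^{\alpha(\epsilon)\epsilon}\cdot e^{-\lambda(\epsilon)L}$, and the needed gap $\lambda(\epsilon)>0$ follows, after substituting $1/\alpha(\epsilon)=\epsilon+\log(2B(p)e)+3\log\log(2B(p)e)$, from the inequality $e^{\epsilon}\bigl(\log(2B(p)e)\bigr)^{3}\ge\epsilon+\log(2B(p)e)+3\log\log(2B(p)e)$, which holds with room to spare since $\log(2B(p)e)\ge\log(2e)>3/2$; that positive gap $\lambda(\epsilon)$ is exactly what absorbs the polynomial factor and the loss from summing the at most $L$ terms of the geometric-type middle sum. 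In the second case the factor $(1-\epsilon)^{d}$ alone already drives the term below $w^{\alpha(\epsilon)\epsilon}$, so it causes no trouble. Collecting the three pieces and using $w\le w^{\alpha(\epsilon)\epsilon}$ yields $\stab_\epsilon(f)\le(6e+1)\,w^{\alpha(\epsilon)\epsilon}$ once the constants are tallied.

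The main obstacle is precisely this middle-range estimate: one must choose $d_1$ so that the geometric decay of the Lemma bound kicks in exactly where the factor $(1-\epsilon)^{d}$ has shrunk to $w^{\alpha(\epsilon)\epsilon}$, and then keep the constants under enough control to recover the stated exponent — indeed it is this optimization that forces the $\log(2B(p)e)$ term and, through the cube $e^{3\log\log(2B(p)e)}=(\log(2B(p)e))^{3}$, the $3\log\log(2B(p)e)$ term in the denominator of $\alpha(\epsilon)$. A further, more routine nuisance is the split according to whether the per-level bound is maximized at the endpoint $d_1$ or at an interior level, which happens as $\epsilon$ approaches $1$; as noted, that regime is easy because $(1-\epsilon)^{d}$ is then already tiny, but it still has to be written out.
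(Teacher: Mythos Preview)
Your approach is essentially the paper's: split $\stab_\epsilon(f)=\sum_d(1-\epsilon)^dW_d$ at the cutoff $L=\alpha(\epsilon)\log(1/\weight)$, bound the tail trivially by $(1-\epsilon)^L\le\weight^{\alpha\epsilon}$, bound $W_1\le\weight$ directly, apply Lemma~\ref{Lemma-biased-main} for $2\le d\le L$, and then verify that the dominant middle term is $\le C\,\weight^{\alpha\epsilon}$ via exactly the inequality $(\log(2B(p)e))^3\ge \epsilon+\log(2B(p)e)+3\log\log(2B(p)e)$.

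The one place where the paper is a bit cleaner is the handling of the middle sum. The paper simply \emph{drops} the factors $(1-\epsilon)^d$ in the range $d\le L$ and then checks that the ratio of consecutive terms in the resulting sum is at least~$2$ (so the whole sum is at most twice its last term). Because you keep the $(1-\epsilon)^d$ factors inside, your per-level sequence need not be monotone, which is what forces your extra case split according to whether the maximum sits at $d_1$ or at an interior $m^\ast\approx 2B(p)(1-\epsilon)L$; that case split is avoidable and the ``$\epsilon$ near $1$'' regime disappears if you follow the paper's device. Apart from this bookkeeping difference, the argument, the choice of cutoff, and the final elementary inequality are the same.
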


We note that for small $p$, $B(p)\approx \frac{1}{p\log(1/p)}$, and thus
Theorem~\ref{thm:biased-main} is useful only when $\log(1/p)$ is
assymptotically smaller than $\log(n)$. Indeed, when $p$ is inverse
polynomially small in $n$ the BKS theorem does not hold even
qualitatively -- there exist functions which have assymptotically small influences but
are noise stable. The graph property of containing a triangle with
respect to the critical probability $p$ is an
example of such a function.

\paragraph{Tightness.} Our main result (Theorem~\ref{thm:biased-main}) is tight for small $p$ up to a
constant factor in the exponent of $\weight(f)$, which tends to $1$
for small $\epsilon$, and for $p=1/2$ it is tight up to a constant
factor in the exponent. In Section~\ref{Sec:Tightness} we prove this,
and also discuss the tightness of Lemma~\ref{Lemma-biased-main},
showing that it is essentially tight.

\paragraph{Organization.} This paper is organized as follows: in
Section~\ref{Sec:Preliminaries} we recall the definitions of the
biased Fourier-Walsh expansion, hypercontractivity estimates, and some
related large deviation bounds. In Section~\ref{Sec:Lemma} we present
the proof of Lemma~\ref{Lemma-biased-main} (which immediately implies
Lemma~\ref{Lemma-Main} as well). In Section~\ref{Sec:Theorem-Proof} we
show that Lemma~\ref{Lemma-biased-main} implies
Theorem~\ref{thm:biased-main} (and also Theorem~\ref{thm:main}). In
Section~\ref{Sec:Tightness} we discuss the tightness of our
results. Finally, in the Appendix we prove a decoupled version of
Lemma~\ref{Lemma-biased-main}.

\section{Preliminaries}
\label{Sec:Preliminaries}

\subsection{Biased Fourier-Walsh Expansion of Functions on the Discrete
Cube} \label{sec:sub:Fourier}

Throughout the paper we consider the discrete cube
$\Omega=\{0,1\}^n$, endowed with a probability product measure
$\mu=\mu_{p} \otimes \cdots \otimes \mu_{p}$, i.e.,
\[
\mu(x)= \mu \Big((x_1,\ldots,x_n)\Big)=\prod_{i=1}^n p^{x_i}
(1-p)^{1-x_i}.
\]
Elements of $\Omega$ are represented either by binary vectors of
length $n$, or by subsets of $\{1,2,\ldots,n\}$. Denote the set of
all real-valued functions on the discrete cube by $Y$. The inner
product of functions $f,g \in Y$ is defined as usual as
\[
\langle f,g \rangle = \expect{[fg]} = \sum_{x \in \{0,1\}^n} \mu(x)
f(x) g(x).
\]
This inner product induces a norm on $Y$:
\[
||f||_2 = \sqrt{\langle f,f \rangle} = \sqrt{\expect [f^2]}.
\]
\paragraph{Walsh Products.} Consider the functions $\{\ch_i\}_{i=1}^n$, defined as:
\[
\ch_i(x_1,\ldots,x_n) = \left\lbrace
  \begin{array}{c l}
    \sqrt{\frac{1-p}{p}}, & x_i=1\\
    -\sqrt{\frac{p}{1-p}}, & x_i=0.
  \end{array}
\right.
\]
As was observed in~\cite{Talagrand:94}, these functions constitute
an orthonormal system in $Y$ (with respect to the measure $\mu$).
Moreover, this system can be completed to an orthonormal basis in
$Y$ by defining
\[
\ch_T = \prod_{i \in T} \ch_i
\]
for all $T \subset \{1,\ldots,n\}$. The functions $\ch_T$ are called
(biased) Walsh products.

\medskip
\paragraph{Fourier-Walsh expansion.} Every function $f \in Y$ can
be represented by its Fourier-Walsh expansion with respect to the system
$\{\ch_T\}_{T \subset \{1,\ldots,n\}}$:
\[
f = \sum_{T \subset \{1,\ldots,n\}} \langle f,\ch_T \rangle \ch_T.
\]
The coefficients in this expansion are denoted
\[
\hat f(T) = \langle f,\ch_T \rangle.
\]
A coefficient $\hat f(T)$ is called {\it k-th level coefficient}
if $|T|=k$. By the Parseval identity, for all $f \in Y$ we have
\[
\sum_{T \subset \{1,\ldots,n\}} \hat f(T)^2 = ||f||_2^2.
\]

\medskip

\paragraph{Relation between Fourier-Walsh expansion, noise
stability, and influences} The noise stability of a Boolean
function can be expressed in a convenient way in terms of the
Fourier-Walsh expansion of the function.
\begin{claim}\label{Claim2.1}
For any function $f:\{0,1\}^n \rightarrow \{0,1\}$ and for any
$\epsilon>0$, we have
\[
\stab_{\epsilon}(f)= \sum_{S\neq\emptyset}
(1-\epsilon)^{|S|}\hat{f}(S)^2.
\]
\end{claim}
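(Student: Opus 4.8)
The plan is to rewrite the covariance as an inner product involving the noise (averaging) operator and then expand in the Walsh basis. For $\rho=1-\epsilon$, I would first introduce $(T_\rho f)(x)\eqdef\expect_{y\sim\noise_\epsilon(x)}[f(y)]$. Since $\noise_\epsilon$ resamples each coordinate independently and resampling preserves $\mu$, the marginal distribution of $y$ is again $\mu$, so $\expect_{x\sim\mu}[(T_\rho f)(x)]=\expect_\mu[f]$. Hence, directly from the definition of $\stab_\epsilon$,
\[
\stab_\epsilon(f)=\expect_{x\sim\mu}\bigl[f(x)\,(T_\rho f)(x)\bigr]-\expect_\mu[f]^2=\langle f,T_\rho f\rangle-\hat f(\emptyset)^2,
\]
using $\hat f(\emptyset)=\langle f,\ch_\emptyset\rangle=\expect_\mu[f]$.

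The heart of the argument is the claim that $T_\rho$ is diagonal in the Walsh basis, $T_\rho\ch_T=\rho^{|T|}\ch_T$. Fix $x$. Because the coordinates of $y\sim\noise_\epsilon(x)$ are independent, $\ch_T(y)=\prod_{i\in T}\ch_i(y_i)$ is a product of independent random variables, so $\expect_y[\ch_T(y)]=\prod_{i\in T}\expect_{y_i}[\ch_i(y_i)]$. For a single coordinate, with probability $1-\epsilon$ we have $y_i=x_i$, and with probability $\epsilon$ we have $y_i\sim\mu_p$; since $\{\ch_i\}$ together with the constant function $1$ forms an orthonormal system, $\expect_{z\sim\mu_p}[\ch_i(z)]=\langle\ch_i,\ch_\emptyset\rangle=0$, whence $\expect_{y_i}[\ch_i(y_i)]=(1-\epsilon)\ch_i(x_i)=\rho\,\ch_i(x_i)$. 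Multiplying over $i\in T$ gives $T_\rho\ch_T=\rho^{|T|}\ch_T$, and by linearity applied to $f=\sum_S\hat f(S)\,\ch_S$ we obtain $T_\rho f=\sum_S\rho^{|S|}\hat f(S)\,\ch_S$.

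Combining the two steps, Parseval (orthonormality of $\{\ch_S\}$) gives $\langle f,T_\rho f\rangle=\sum_S\rho^{|S|}\hat f(S)^2$, and subtracting the $S=\emptyset$ term $\hat f(\emptyset)^2$ (its coefficient being $\rho^0=1$) leaves exactly $\stab_\epsilon(f)=\sum_{S\neq\emptyset}(1-\epsilon)^{|S|}\hat f(S)^2$. I do not anticipate a real obstacle here: the only points that need a word of justification are the coordinatewise factorization of $\expect_y[\ch_T(y)]$, which is immediate from the independent-resampling definition of $\noise_\epsilon$, and the vanishing of $\expect_{\mu_p}[\ch_i]$, which is built into the orthonormality of the Walsh system. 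Note also that Booleanness of $f$ is never used --- the identity holds for every $f\in Y$.
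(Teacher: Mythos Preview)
Your proof is correct and is essentially the same as the paper's: both establish the action of noise on a single Walsh character, extend to general characters via independence/multiplicativity, and then pass to arbitrary $f$ by (bi)linearity. Your phrasing via the noise operator $T_\rho$ and Parseval is the standard packaging of exactly the computation the paper sketches.
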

The assertion is obtained by direct computation in the case where
$f$ is a linear character, and it follows for general characters
by multiplicativity of expectation for independent random
variables. It then follows for the general case by linearity of
expectation.

\medskip

The influences are also related to the Fourier-Walsh
expansion. It can be easily shown that $p(1-p) \cdot \sum_{i=1}^n
I_i(f)=\sum_{S} |S| \hat f(S)^2$.
Moreover, the influences are specifically related to the
first-level Fourier coefficients. Indeed, denoting by
$x_{-i}\in\set{0,1}^{[n]\setminus\set{i}}$ the vector obtained
from $x$ by omitting the $i$'th coordinate, we have (for any
Boolean function $f$ and for any $1 \leq i \leq n$):
\[
|\hat f(\{i\})|= |\expect_x [\ch_i(x) f(x)]| \leq
\expect_{x_{-i}\in\set{0,1}^{[n]\setminus\set{i}}}\brac{
      \abs{\expect_{x_i\in\set{0,1}}\brac{\ch_i(x)f(x)}}} =
      \sqrt{p(1-p)} I_i(f),
\]
and thus,
\begin{equation}\label{Eq:Inf-Fourier1}
\sum_{i=1}^n \hat f(\{i\})^2 \leq \weight(f).
\end{equation}

These expressions of noise stability and influences in terms of
the Fourier-Walsh expansion play an important role in our proof.

\subsection{Sharp Bound on Large Deviations Using the Hypercontractive Inequality}
\label{sec:sub:Chernoff}

A crucial component in the proof of Lemma~\ref{Lemma-Main} is a bound
on the large deviations of low-degree multivariate
polynomials. Formally, for any $d \geq 1$, we would like to bound the
probability $\Pr[|f| \geq t]$, for every function $f$ whose Fourier
degree is at most $d$. In the uniform measure case, such bound was
obtained in~\cite{BKS} using the Bonami-Beckner hypercontractive
inequality~\cite{Bonami:70,Beckner:75}. In the biased case, one should
use a biased version of the Bonami-Beckner inequality instead, and the
strength of the obtained bound depends on the {\it hypercontractivity
  constant}, which depends on the bias. The optimal value of the
hypercontractivity constant for biased measures was obtained by
Oleszkiewicz in 2003~\cite{Oleszkiewicz:03}. For ease of
presentation, we cite a large deviation bound,  presented
in~\cite{DFKO}\footnote{In fact there is a small typo in the formula
  for $B(p)$ in~\cite{DFKO}, which is fixed here.}, that relies on a slightly weaker estimate of the
hypercontractivity constant.
\begin{definition}
For all $0<p<1$, let
\begin{equation}\label{Eq:B(p)}
B(p) = \frac{\frac{1-p}{p}-\frac{p}{1-p}}{2 \ln\frac{1-p}{p}} =
\frac{(1-p)-p}{2p(1-p) (\ln(1-p)-\ln p)}.
\end{equation}
\end{definition}
%\begin{remark}
%This is Definition 4 in~\cite{DFKO}. Note that there is probably a
%mistake in the definition in the form in which it is presented
%in~\cite{DFKO}.
%\end{remark}
\begin{theorem}[Lemma 2.2 in~\cite{DFKO}]
\label{Thm:Oles}
 Let $f:\{0,1\}^n \rightarrow \mathbb{R}$ have Fourier degree at
most $d$, and assume that $||f||_2=1$. Then for any $t \geq
(2B(p)e)^{d/2}$,
\begin{equation}
  \label{eq:7}
\Pr[|f| \geq t] \leq \exp \Big(-\frac{d}{2B(p)e} t^{2/d} \Big).
\end{equation}
\end{theorem}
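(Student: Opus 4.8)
The plan is to derive this tail bound in the standard way: bound a high moment of $f$ using hypercontractivity, then apply Markov's inequality and optimize the order of the moment.

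First I would invoke the $p$-biased hypercontractive inequality for low-degree functions. In the form relevant here (the optimal constant is due to Oleszkiewicz~\cite{Oleszkiewicz:03}, and a slightly weaker but cleaner estimate is the one actually used in~\cite{DFKO}), it asserts that if $f$ has Fourier degree at most $d$, then for every $q\ge 2$,
\[
\|f\|_q \;\le\; \bigl(B(p)\,q\bigr)^{d/2}\,\|f\|_2 .
\]
This comes from the single-coordinate inequality $\|a+\rho\,b\,\ch\|_q \le \|a+b\,\ch\|_2$, which holds exactly for $\rho$ below a threshold governed by $p$ and $q$ (this is where the constant $B(p)$ enters, and where the mild weakening of the optimal constant is absorbed), together with tensorization over the $n$ coordinates: applying the noise operator $T_\rho$ to a degree-$\le d$ function multiplies each level-$k$ coefficient by $\rho^k \ge \rho^d$, so $\|f\|_q \le \rho^{-d}\|f\|_2$ for the extremal $\rho$. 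I would cite this rather than reprove it.

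Given the moment bound, the remainder is a one-line optimization. Since $\|f\|_2 = 1$, Markov applied to $|f|^q$ gives, for every $q\ge 2$,
\[
\Pr[\,|f|\ge t\,] \;=\; \Pr[\,|f|^q\ge t^q\,] \;\le\; \frac{\E[\,|f|^q\,]}{t^q} \;=\; \Bigl(\frac{\|f\|_q}{t}\Bigr)^{q} \;\le\; \Bigl(\frac{(B(p)q)^{d/2}}{t}\Bigr)^{q}.
\]
Now choose $q := t^{2/d}/(B(p)e)$; the hypothesis $t\ge (2B(p)e)^{d/2}$ is precisely what guarantees $q\ge 2$, so this choice is legitimate. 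With this $q$ one has $(B(p)q)^{d/2} = (t^{2/d}/e)^{d/2} = t\,e^{-d/2}$, hence $\|f\|_q/t \le e^{-d/2}$, and therefore
\[
\Pr[\,|f|\ge t\,] \;\le\; e^{-qd/2} \;=\; \exp\!\Bigl(-\frac{d}{2B(p)e}\,t^{2/d}\Bigr),
\]
which is the claimed estimate.

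The genuinely delicate point is not this computation but the hypercontractive input: one must pin down the sharp (or near-sharp) one-variable hypercontractivity constant for $\mu_p$ and verify that the version actually invoked — slightly weaker than Oleszkiewicz's optimum, per the footnote of~\cite{DFKO} — still produces exactly the exponent $d/(2B(p)e)$ and the admissibility threshold $t\ge(2B(p)e)^{d/2}$. I would also sanity-check the limiting behaviour: $B(1/2)=1$, recovering the Bonami--Beckner-based bound of~\cite{BKS} in the uniform case, while $B(p)\sim 1/(2p\log(1/p))$ as $p\to 0$, which is the source of the restriction $\log(1/p)\ll\log n$ elsewhere in the paper. Since the statement is quoted verbatim from~\cite{DFKO}, I would ultimately just cite it; the sketch above only records how its constants arise.
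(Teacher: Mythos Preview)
Your sketch is correct, but there is nothing to compare it against: the paper does not prove this statement at all. It is quoted verbatim as Lemma~2.2 of~\cite{DFKO} and simply cited. Your derivation---the $p$-biased moment bound $\|f\|_q \le (B(p)q)^{d/2}\|f\|_2$, Markov on $|f|^q$, and the choice $q = t^{2/d}/(B(p)e)$---is the standard one and is exactly what~\cite{DFKO} does, so your concluding instinct to just cite it matches the paper.
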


The next lemma, which easily follows from Fubini's theorem, allows
using large deviation bounds to evaluate certain expectations. The
integral that we get when we later apply it, using the bounds in
Theorem~\ref{Thm:Oles}, is considered in
Lemma~\ref{lemma:beckner-integral} .

\begin{lemma}\label{lemma:simple}
  Let $\Omega$ be a probability space, and let $f,g:\Omega\to\reals$ be
  functions, where $g$ is non-negative. For any real number $t$, let
  $L(t)\subseteq\Omega$ be defined by $L(t)=\set{x\;\colon\ g(x)>t}$,
  and let $\indicator_{L(t)}$ be the indicator of the set $L(t)$.
Then we have
\begin{displaymath}
  \expect_{x\in\Omega}  \brac{ f(x)g(x)} =
  \int_{t=0}^\infty \Parenth{\expect_{x\in\Omega} \brac{ f(x) \cdot \indicator_{L(t)}(x) }}\;dt
\end{displaymath}
\end{lemma}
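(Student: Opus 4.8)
The plan is to prove Lemma~\ref{lemma:simple} by writing the right-hand side as a double integral and switching the order of integration via Fubini's theorem. First I would observe that for any non-negative real-valued function $g$ and any point $x\in\Omega$, the ``layer-cake'' identity $g(x)=\int_{t=0}^\infty \indicator_{L(t)}(x)\,dt$ holds, since $\indicator_{L(t)}(x)=1$ precisely when $t<g(x)$, so the inner integral is just $\int_{0}^{g(x)}dt=g(x)$. Multiplying by $f(x)$ gives $f(x)g(x)=\int_{t=0}^\infty f(x)\,\indicator_{L(t)}(x)\,dt$ pointwise in $x$.

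Next I would take the expectation over $x\in\Omega$ of both sides, so that the left-hand side becomes exactly $\expect_{x\in\Omega}[f(x)g(x)]$, and the right-hand side becomes $\expect_{x\in\Omega}\bigl[\int_{t=0}^\infty f(x)\,\indicator_{L(t)}(x)\,dt\bigr]$. The remaining task is to interchange the expectation (an integral against the probability measure on $\Omega$) with the integral over $t\in[0,\infty)$. This is where Fubini's (or Tonelli's) theorem enters: applied to the function $(x,t)\mapsto f(x)\indicator_{L(t)}(x)$ on the product space $\Omega\times[0,\infty)$, it yields
\[
\expect_{x\in\Omega}\Parenth{\int_{t=0}^\infty f(x)\indicator_{L(t)}(x)\,dt}
= \int_{t=0}^\infty \Parenth{\expect_{x\in\Omega}\brac{f(x)\indicator_{L(t)}(x)}}\,dt,
\]
which is exactly the claimed identity.

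The only subtlety, and hence the main point requiring a word of care, is the integrability hypothesis needed to justify the Fubini interchange. In the intended applications $\Omega=\{0,1\}^n$ is finite, so every expectation is a finite sum and there are no measurability or $\sigma$-finiteness issues; one just needs $\int_{t=0}^\infty |f(x)|\indicator_{L(t)}(x)\,dt=|f(x)|g(x)<\infty$ for each $x$, which holds since $g$ is a finite real-valued function on a finite set. In the general statement one should assume $g$ is bounded (or that $fg$ is integrable and $g$ is, say, essentially bounded), so that Tonelli applies to the non-negative function $|f(x)|\indicator_{L(t)}(x)$ and gives $\int_0^\infty\expect_x[|f(x)|\indicator_{L(t)}(x)]\,dt=\expect_x[|f(x)|g(x)]<\infty$, after which Fubini permits the interchange for $f$ itself. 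I do not expect any real obstacle here; the lemma is essentially a restatement of the layer-cake formula combined with Fubini, and the proof is a couple of lines once the pointwise identity $g(x)=\int_0^\infty\indicator_{L(t)}(x)\,dt$ is written down.
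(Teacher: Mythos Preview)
Your proposal is correct and is essentially identical to the paper's own proof: the paper also writes $g(x)=\int_0^{g(x)}\indicator\,dt$, rewrites this as $\int_0^\infty \indicator_{L(t)}(x)\,dt$, multiplies by $f(x)$, takes expectation, and then invokes Fubini's theorem to swap the order of integration. If anything, you are slightly more careful than the paper in spelling out the integrability justification for the Fubini step.
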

\begin{proof}
  \begin{align*}
\expect_{x\in\Omega}  \brac{ f(x)g(x)} &=\expect_{x\in\Omega}  \brac{
  f(x)\cdot  \int_{t=0}^{g(x)}\indicator\;dt} = \expect_{x\in\Omega}  \brac{
  \int_{t=0}^{\infty} f(x)\cdot\indicator_{L(t)}(x)\;dt}\\&= \int_{t=0}^\infty
  \Parenth{\expect_{x\in\Omega} \brac{ f(x) \cdot \indicator_{L(t)}(x) }}\;dt,
  \end{align*}
where the last equality follows from Fubini's theorem.
\end{proof}

\begin{lemma}
  \label{lemma:beckner-integral}
Let $d \geq 2$ be a positive integer, and let $t_0$ be such that
$t_0>\Parenth{4B(p)\cdot e}^{(d-1)/2}$. Then
\begin{equation}
  \label{eq:8}
  \int_{t=t_0}^\infty t^2\cdot\exp\Parenth{-\frac{(d-1)}{2B(p)\cdot
      e}\cdot t^{2/(d-1)}}\;dt \leq 5B(p)\cdot e\cdot t_0^{3-\frac{2}{d-1}}\cdot\exp\Parenth{-\frac{(d-1)}{2B(p)\cdot
      e}\cdot t_0^{2/(d-1)}}.
\end{equation}
\end{lemma}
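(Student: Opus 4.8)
The plan is to reduce the integral to a standard Gamma-type tail bound by the substitution that linearizes the exponent, and then to control the resulting incomplete Gamma integral by its leading term. Concretely, I would set $u = t^{2/(d-1)}$, so that $t = u^{(d-1)/2}$ and $dt = \tfrac{d-1}{2}u^{(d-3)/2}\,du$; then $t^2\,dt = \tfrac{d-1}{2}u^{(d-1)/2 + (d-3)/2}\,du = \tfrac{d-1}{2}u^{d-2}\,du$, and the integral becomes $\tfrac{d-1}{2}\int_{u_0}^\infty u^{d-2}e^{-cu}\,du$ with $c = \tfrac{d-1}{2B(p)e}$ and $u_0 = t_0^{2/(d-1)}$. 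The hypothesis $t_0 > (4B(p)e)^{(d-1)/2}$ is exactly $u_0 > 4B(p)e = 2(d-1)/c$, i.e. $cu_0 > 2(d-1) \geq 2(d-2)$, which is the regime where the integrand $u^{d-2}e^{-cu}$ is already past its maximum (attained at $u = (d-2)/c$) and decaying.

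The key step is then a tail estimate for $\int_{u_0}^\infty u^{k}e^{-cu}\,du$ with $k = d-2$ when $cu_0 \geq 2k$. I would prove this by repeated integration by parts (or equivalently by the standard bound on the upper incomplete Gamma function): $\int_{u_0}^\infty u^k e^{-cu}\,du = \tfrac{1}{c}u_0^k e^{-cu_0} + \tfrac{k}{c}\int_{u_0}^\infty u^{k-1}e^{-cu}\,du$, and since on $[u_0,\infty)$ we have $u^{k-1} \leq u^k/u_0$, the tail integral satisfies $\int_{u_0}^\infty u^k e^{-cu}\,du \leq \tfrac{1}{c}u_0^k e^{-cu_0} + \tfrac{k}{cu_0}\int_{u_0}^\infty u^{k}e^{-cu}\,du$, so $\bigl(1 - \tfrac{k}{cu_0}\bigr)\int_{u_0}^\infty u^k e^{-cu}\,du \leq \tfrac{1}{c}u_0^k e^{-cu_0}$. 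Using $cu_0 \geq 2k$ gives $1 - \tfrac{k}{cu_0} \geq \tfrac12$, hence $\int_{u_0}^\infty u^k e^{-cu}\,du \leq \tfrac{2}{c}u_0^k e^{-cu_0}$. Multiplying by $\tfrac{d-1}{2}$ and substituting back $c = \tfrac{d-1}{2B(p)e}$, $u_0 = t_0^{2/(d-1)}$, I get $\tfrac{d-1}{2}\cdot\tfrac{2}{c}u_0^k e^{-cu_0} = 2B(p)e\cdot t_0^{2(d-2)/(d-1)}\cdot\exp\bigl(-\tfrac{d-1}{2B(p)e}t_0^{2/(d-1)}\bigr)$, and since $2(d-2)/(d-1) = 2 - 2/(d-1) = 3 - 2/(d-1) - 1 \leq 3 - 2/(d-1)$ while $t_0 > 1$, I may replace the exponent of $t_0$ by $3 - \tfrac{2}{d-1}$ at the cost of nothing, landing inside the claimed bound with constant $2B(p)e < 5B(p)e$.

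The only mild subtlety — and the step I expect to require the most care — is the bookkeeping at $d=2$, where $k = d-2 = 0$: there the integration-by-parts recursion terminates immediately ($\int_{u_0}^\infty e^{-cu}\,du = \tfrac1c e^{-cu_0}$ exactly), and one should check that the hypothesis $cu_0 \geq 2k = 0$ is vacuous but the stated inequality $t_0 > (4B(p)e)^{1/2}$ still suffices, since $2 - 2/(d-1) = 0 = 3 - 2/(d-1) - 1$ and again $t_0 \geq 1$ lets us inflate the power. Everything else is a routine change of variables and an elementary Gamma tail bound; no part of the argument is genuinely hard, and the generous slack between $2B(p)e$ and $5B(p)e$ (and between $2-\tfrac{2}{d-1}$ and $3-\tfrac2{d-1}$) absorbs any looseness in the constants.
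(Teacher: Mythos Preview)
Your overall strategy is right and very close to the paper's, but there is an arithmetic slip in the change of variables that propagates through the rest of the argument. With $t=u^{(d-1)/2}$ you have $t^2=u^{\,d-1}$, not $u^{(d-1)/2}$, so
\[
t^2\,dt \;=\; u^{\,d-1}\cdot \tfrac{d-1}{2}\,u^{(d-3)/2}\,du \;=\; \tfrac{d-1}{2}\,u^{(3d-5)/2}\,du,
\]
and the correct exponent is $k=(3d-5)/2$, not $k=d-2$. (This is exactly the power $s^{(3d-5)/2}$ that appears in the paper's computation after the analogous substitution.) Two things then need to be redone. First, your tail condition $cu_0\geq 2k$ becomes $2(d-1)\geq 3d-5$, which fails for $d\geq 4$; you only get $k/(cu_0)<(3d-5)/(4d-4)\to 3/4$, hence $1-k/(cu_0)>1/4$ and the constant in front becomes $4$ rather than $2$. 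Second, after substituting back you land directly on $t_0^{(3d-5)/(d-1)}=t_0^{3-2/(d-1)}$ with no need to ``inflate'' the exponent; your inflation step was compensating for the missing factor of $t$. With these corrections the bound comes out as $4B(p)e\cdot t_0^{3-2/(d-1)}\exp(\cdots)\leq 5B(p)e\cdot(\cdots)$, and the $d=2$ case (now $k=1/2$, not $0$) is no longer degenerate.

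As for the comparison: the paper makes essentially the same substitution (it absorbs $c$ into the new variable, writing $s=cu$) and then bounds the incomplete-Gamma tail $\int_{s_0}^\infty s^{(3d-5)/2}e^{-s}\,ds$ by a geometric-decay argument, observing that the integrand drops by at least a factor $e^{-1/4}$ on each unit interval and summing the resulting geometric series to get the factor $1/(1-e^{-1/4})<5$. Your integration-by-parts route is a perfectly good alternative and arguably cleaner; once the exponent is fixed it gives a slightly better constant.
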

\begin{proof}
  To bound the l.h.s. of~(\ref{eq:8}) we first apply a change of
  variables, setting
  \begin{equation}
    \label{eq:10}
s=\frac{(d-1)}{2B(p)\cdot e}\cdot
  t^{2/(d-1)},
  \end{equation}
and obtaining
  \begin{equation}\label{eq:9}
\begin{split}
    \int_{t=t_0}^\infty t^2\cdot\exp\Parenth{-\frac{(d-1)}{2B(p)\cdot
        e}\cdot t^{2/(d-1)}}\;dt\qquad\qquad\qquad\qquad\qquad\qquad\qquad\qquad\qquad\qquad\\= \Parenth{\frac{2B(p)\cdot
        e}{d-1}}^{3(d-1)/2}\cdot \frac{d-1}2\cdot \int_{s=s_0}^\infty
    s^{(3d-5)/2}\cdot\exp(-s) \;ds\end{split}
\end{equation}
where $s_0=\frac{(d-1)}{2B(p)\cdot e}\cdot (t_0)^{2/(d-1)}$. If we
denote the integrand on the r.h.s.\ of (\ref{eq:9}) by $\varphi(s)$, one
notes that for $s\geq s_0$, $\varphi(s)$ is decreasing and
$\varphi(s+1)/\varphi(s)\leq \exp(-1/4)$ -- this follows from the
condition on $t_0$ and from (\ref{eq:10}). It therefore follows that
the integral on the r.h.s.\ of (\ref{eq:9}) is bounded by
$(s_0)^{(3m-5)/2}\cdot\exp(-s_0)\cdot\frac{1}{1-\exp(-1/4)} \leq 5
(s_0)^{(3m-5)/2}\cdot\exp(-s_0)$. Substituting into (\ref{eq:10})
gives the lemma.
\end{proof}

\section{Proof of Lemma~\ref{Lemma-biased-main}}
\label{Sec:Lemma}

\begin{notation}
Throughout the proof, we use a ``normalized'' variant of the
influences:
\[
I'_i(f)=\sqrt{p(1-p)} I_i(f).
\]
\end{notation}
\noindent This notation is only technical, and is intended to
avoid carrying the factor $\sqrt{p(1-p)}$ along the proof. Note
that $\weight(f)=\sum_{i=1}^n I'_i(f)^2$.

\subsection{Two key observations}

The key to the proof of Lemma~\ref{Lemma-biased-main} is based on two
observations, as was the proof in~\cite{Talagrand:97}.

\paragraph{First observation.} We write the space $\set{0,1}^n$ as a
product of two probability spaces $\set{0,1}^I$ and $\set{0,1}^J$. We
consider for every $j\in J$, the part of the Fourier-Walsh expansion
of $f$, which consists of Walsh products whose sole representative in
$J$ is $j$.

We now note that it is sufficient to prove that for every
partition $\{I,J\}$ of $\{1,\ldots,n\}$,
\begin{align}
\sum_{\stackrel{T \subset I}{ |T|=d-1}} \sum_{j \in J} \hat f(\{T,j\})^2
\leq   5 \cdot \Parenth{\frac{2B(p)\cdot
      e}{d-1}}^{d-1}\cdot\Parenth{\sum_{j\in J}{I'_j(f)}^2}\cdot \Parenth{\log\Parenth{\frac{1}{\sum_{j\in J}I'_j(f)^2}}}^{d-1}.
\label{newEq3.3.1}
\end{align}
The assertion of Lemma~\ref{Lemma-biased-main} will follow
from~\eqref{newEq3.3.1} by taking expectation over the partitions
$\set{I,J}$, such that every coordinate is independently put into
$J$ with probability $1/d$. We give the exact details at the end
of the proof.

\paragraph{Second observation.} The second observation is that we can
write the left-hand-side of~\eqref{newEq3.3.1} as the inner-product of
$f$ with a function of the form $\sum f_j'\ch_j$, where the
functions $\set{f_j'}$ are all of low degree, and depend only on
coordinates from $I$. The low degree of the $f_j'$'s will allow us to use
Theorem~\ref{Thm:Oles} to bound them.

For a given partition $\set{I,J}$
of $\set{1,\ldots,n}$ and an index $j\in J$, let
\[
f'_j=\sum_{T \subset I, |T|=m-1}\hat{f}(T,j)\ch_T.
\]
Note that $f'_j$ indeed depends only on coordinates from $I$.
We have

\begin{equation}
  \label{eq:6}
\inner{f'_j\cdot\ch_j,f} =\inner{\sum_{T
    \subset I, |T|=d-1}\hat{f}(T,j)\ch_{T\cup \{j\}},f} = \sum_{T \subset I,
  |T|=d-1} \hat f(\{T,j\})^2,
\end{equation}
and summing
over $j$ we have
\begin{equation}
  \label{eq:5}
\inner{ \sum_jf_j' \cdot \ch_j, f}= \sum_{T \subset I, |T|=d-1}
\sum_{j\in J}\hat f(\{T,j\})^2=\text{\ l.h.s. of~(\ref{newEq3.3.1})}.
\end{equation}
It will be convenient for us to normalize $f_j'$, hence we take
$f_j=f'_j/||f'_j||_2$. It follows from~(\ref{eq:6}) that for every $j\in J$,
\begin{equation}\label{eq:1}
\inner{ f_j \cdot \ch_j, f}= \Parenth{\sum_{T \subset I, |T|=d-1}  \hat f(\{T,j\})^2}^{1/2}.
\end{equation}

\subsection{Proof of~(\ref{newEq3.3.1}).}

Using~\eqref{eq:1} and the
fact that $f_j$ only depends on coordinates from $I$, we have for
every $j\in J$ that
\begin{align}
\sum_{T \subset I, |T|=d-1} \hat{f}(T,j)^2
&= \parenth{\inner{f_j\cdot \ch_j, f}}^2 = \parenth{\inner{f_j,\ch_j\cdot f}}^2 \label{eq:12}\\&=
\Parenth{\expect_{x\in \set{0,1}^I}\brac{f_j(x)\cdot\expect_{y\in
    \set{0,1}^J}\brac{\ch_j(x,y)f(x,y)}}}^2 \notag\\&\leq
\Parenth{\expect_{x\in \set{0,1}^I}\brac{|f_j(x)|\cdot\abs{\expect_{y\in
    \set{0,1}^J}\brac{\ch_j(x,y)f(x,y)}}}}^2.\label{eq:2}
\end{align}
We now use Lemma~\ref{lemma:simple} to bound (\ref{eq:2}) by
considering the two multiplicands in the expectations as functions
over $\set{0,1}^I$, and obtain
\begin{align*}
  (\ref{eq:2})&\leq \Parenth{ \int_{t=0}^\infty
    \expect_{x\in\set{0,1}^I}\brac{\indicator_{\set{|f_j(x)|>t}}\cdot
      \abs{\expect_{y\in
    \set{0,1}^J}\brac{\ch_j(x,y)f(x,y)}}}
 \;dt }^2 .
\end{align*}
Using the inequality $(a+b)^2\leq 2a^2+2b^2$ we thus have that for
any parameter $t_0$, (\ref{eq:2}) is bounded above by
\begin{align}
    &2\Parenth{ \int_{t=0}^{t_0}
    \expect_{x\in\set{0,1}^I}\brac{\indicator_{\set{|f_j(x)|>t}}\cdot
      \abs{\expect_{y\in
    \set{0,1}^J}\brac{\ch_j(x,y)f(x,y)}}} \;dt }^2
  \label{eq:3}\\ &+
  2 \Parenth{ \int_{t=t_0}^\infty
     \expect_{x\in\set{0,1}^I}\brac{\indicator_{\set{|f_j(x)|>t}}\cdot
       \abs{\expect_{y\in
     \set{0,1}^J}\brac{\ch_j(x,y)f(x,y)}}}
  \;dt }^2 .\label{eq:4}
\end{align}
We will bound separately each of these summands.

\paragraph{Bounding~(\ref{eq:3}).} For $z\in\set{0,1}^{[n]}$, we denote by $z_{-j}\in\set{0,1}^{[n]\setminus\set{j}}$ the vector
obtained from $z$ by omitting the $j$'th coordinate. Since an
indicator function is bounded by $1$, we have
\begin{align*}
  (\ref{eq:3}) &\leq
  2\Parenth{\int_{t=0}^{t_0}\expect_{x\in\set{0,1}^I}\brac{
      \abs{\expect_{y\in\set{0,1}^J}\brac{\ch_j(x,y)f(x,y)}}}}^2\\&\leq
  2t_0^2\cdot \Parenth{\expect_{x\in\set{0,1}^I}\brac{
      \abs{\expect_{y\in\set{0,1}^J}\brac{\ch_j(x,y)f(x,y)}}} }^2
  \\&\leq 2t_0^2\cdot \Parenth{\expect_{x\in\set{0,1}^{I}}\expect_{y'\in\set{0,1}^{J\setminus\set{j}}}\brac{
      \abs{\expect_{y_j\in\set{0,1}}\brac{\ch_j(x,y',y_j)f(x,y',y_j)}}}
  }^2 \\&=2t_0^2\cdot \Parenth{\expect_{z_{-j}\in\set{0,1}^{[n]\setminus\set{j}}}\brac{
      \abs{\expect_{z_j\in\set{0,1}}\brac{\ch_j(z)f(z)}}}
  }^2 =
  2t_0^2 \cdot{I'_j(f)}^2.
\end{align*}

\paragraph{Bounding (\ref{eq:4}).} In the computation below, we explain
some transitions below the corresponding line. We note that the
two last implications apply if $t_0>\parenth{4B(p)e}^{(d-1)/2}$,
which is indeed the case for the $t_0$ that is chosen later.
\begin{align*}
  (\ref{eq:4})&= 2 \Parenth{ \int_{t=t_0}^\infty
    \frac{1}{t}\cdot t\cdot
     \expect_{x\in\set{0,1}^I}\brac{\indicator_{\set{|f_j(x)|>t}}\cdot
       \abs{\expect_{y\in
     \set{0,1}^J}\brac{\ch_j(x,y)f(x,y)}}}
  \;dt }^2\\
&\leq 2 \Parenth{ \int_{t=t_0}^\infty
    \frac1{t^2}\;dt}  \cdot \int_{t=t_0}^\infty
    t^2\cdot \Parenth{
     \expect_{x\in\set{0,1}^I}\brac{\indicator_{\set{|f_j(x)|>t}}\cdot
       \abs{\expect_{y\in
     \set{0,1}^J}\brac{\ch_j(x,y)f(x,y)}}}
  }^2\;dt \\
\intertext{(by Cauchy-Schwarz)}&\leq \frac2{t_0}  \cdot \int_{t=t_0}^\infty
    t^2\cdot
    \Pr_{x\in\set{0,1}^I}\Brac{|f_j(x)|>t} \cdot \expect_{x\in\set{0,1}^I}\brac{
       \Parenth{\expect_{y\in
     \set{0,1}^J}\brac{\ch_j(x,y)f(x,y)}}^2
  }\;dt  \\
\intertext{(by applying Cauchy-Schwarz on the space $\set{0,1}^I$)}
&\leq \frac2{t_0}  \cdot  \expect_{x\in\set{0,1}^I}\brac{
       \Parenth{\expect_{y\in
     \set{0,1}^J}\brac{\ch_j(x,y)f(x,y)}}^2 }\cdot \int_{t=t_0}^\infty
    t^2\cdot
    \exp \Big(-\frac{(d-1)}{2B(p)e} t^{2/(d-1)} \Big)
  \;dt \\
\intertext{(by pulling the expectation outside of the integral, as it
  does not depend on $t$, and bounding the deviation of $f_j$ using
  Theorem~\ref{Thm:Oles} )}
&\leq
    10e \cdot t_0^{2-\frac{2}{d-1}}  \cdot B(p) \cdot\exp\Parenth{-\frac{(d-1)}{2B(p)\cdot
      e}\cdot t_0^{2/(d-1)}} \cdot
 \expect_{x\in\set{0,1}^I}\brac{
       \Parenth{\expect_{y\in
     \set{0,1}^J}\brac{\ch_j(x,y)f(x,y)}}^2 } \intertext{(using Lemma~\ref{lemma:beckner-integral}).}
\end{align*}

\paragraph{Proving inequality (\ref{newEq3.3.1}).} Since the sum of (\ref{eq:3}) and
(\ref{eq:4}) bounds the l.h.s. of (\ref{eq:12}), we have from the
above bounds that
\begin{align}
  \label{eq:11}
\begin{split}\sum_{T \subset I, |T|=d-1} \hat{f}(T,j)^2 \leq  \ \ 2t_0^2\cdot{I'_j(f)}^2+
\hskip 4in
\\
   + 10e \cdot t_0^{2-\frac{2}{d-1}}  \cdot B(p) \cdot\exp\Parenth{-\frac{(d-1)}{2B(p)\cdot
      e}\cdot t_0^{2/(d-1)}} \cdot
 \expect_{x\in\set{0,1}^I}\brac{
       \Parenth{\expect_{y\in
     \set{0,1}^J}\brac{\ch_j(x,y)f(x,y)}}^2 }.
\end{split}
\end{align}
We now use the following observation: for any fixed $x\in\set{0,1}^I$,
let $f_x:\set{0,1}^J\to \{0,1\}$ be defined by $f_x(y)=f(x,y)$. Then
\begin{align*}
\Parenth{\expect_{y\in
     \set{0,1}^J}\brac{\ch_j(x,y)f(x,y)}}^2=\widehat{f_x}(\set{j})^2.
\end{align*}
Since $||f_x||_2\leq 1$, by Parseval's identity we have that for every $x\in\set{0,1}^I$,
\begin{align}
  \label{eq:14}
\sum_{j\in J}\Parenth{\expect_{y\in
     \set{0,1}^J}\brac{\ch_j(x,y)f(x,y)}}^2\leq 1.
\end{align}
By summing~(\ref{eq:11}) over $j\in J$ and substituting (\ref{eq:14})
inside the expectation, we obtain that
\begin{align}
  \label{eq:15}
  \sum_{j\in J}\sum_{\stackrel{T \subset I}{ |T|=d-1}} \hat{f}(T,j)^2 \leq &
  2t_0^2\cdot \sum_{j\in J}{I'_j(f)}^2
   + 10e \cdot t_0^{2-\frac{2}{d-1}}  \cdot B(p) \cdot\exp\Parenth{-\frac{(d-1)}{2B(p)\cdot
      e}\cdot t_0^{2/(d-1)}}.
\end{align}
We now choose $t_0$ so that
\begin{align*}
  \exp\Parenth{-\frac{(d-1)}{2B(p)\cdot
      e}\cdot t_0^{2/(d-1)}} = \sum_{j\in J}{I'_j(f)}^2.
\end{align*}
A simple computation shows that
\begin{align}
  \label{eq:16}
  (t_0)^2 = \Parenth{\frac{2B(p)\cdot
      e}{d-1}}^{d-1}\cdot\Parenth{\log\Parenth{\frac{1}{\sum_{j\in J}I'_j(f)^2}}}^{d-1}
\end{align}
and by assumption~(\ref{Eq:Restriction3}), we have that
$(t_0)^{2/(d-1)}\geq 4B(p)\cdot e$, satisfying the requirement
mentioned in the bound on \eqref{eq:4}. We now substitute
$t_0$ in (\ref{eq:15}), obtaining the bound
\begin{align}
  \label{eq:17}
  \sum_{j\in J}\sum_{\stackrel{T \subset I}{ |T|=d-1}} \hat{f}(T,j)^2 \leq &
  5 \cdot \Parenth{\frac{2B(p)\cdot
      e}{d-1}}^{d-1}\cdot\Parenth{\sum_{j\in J}{I'_j(f)}^2}\cdot \Parenth{\log\Parenth{\frac{1}{\sum_{j\in J}I'_j(f)^2}}}^{d-1},
\end{align}
thus proving (\ref{newEq3.3.1}).

\paragraph{Completing the proof of Lemma~\ref{Lemma-biased-main}.} Let us choose
$J\subset\set{1,\ldots,n}$ to be a random subset, independently
containing each coordinate with probability $1/d$, and let $I=[n] \setminus J$.
For each subset $S\subseteq\{1,\ldots,n\}$ of
size $d$, the probability that it can be represented as a pair $(T,j)$
where $T\subseteq I$ and $j\in J$, in which case $\hat f(S)^2$ is included
as a summand in the left-hand-side of~\eqref{newEq3.3.1}, is
$((d-1)/d)^{d-1}>1/e$. Hence
\begin{align}
  \label{eq:18}
  \sum_{|S|=d} \hat{f}(S)^2
  \leq e\cdot \expect_{J} \brac{\sum_{j\in J}\sum_{\stackrel{T \subset I}{ |T|=d-1}} \hat{f}(T,j)^2}.
\end{align}
We wish to apply a similar argument to the r.h.s.\
of~\eqref{newEq3.3.1}, using the fact that each coordinate in
$\set{1,\ldots,n}$ appears in the sum with probability $1/d$. We
observe that the function $x\log(1/x)^{d-1}$ is concave in the
segment $[0,\exp(-(d-1))]$, and by assumption
(\ref{Eq:Restriction3}) the sum $\sum_{j\in J}{I'_j(f)}^2$ is in
this range for any $J\subseteq\set{1,\ldots,n}$. We thus have
\begin{align}
  \label{eq:13}
\begin{split}
  \expect_{J}&\brac{5 \cdot \Parenth{\frac{2B(p)\cdot
      e}{d-1}}^{d-1}\cdot\Parenth{\sum_{j\in
      J}{I'_j(f)}^2}\cdot \Parenth{\log\Parenth{\frac{1}{\sum_{j\in
          J}I'_j(f)^2}}}^{d-1}} \\ \leq &\frac 5d \cdot \Parenth{\frac{2B(p)\cdot
      e}{d-1}}^{d-1}\cdot\Parenth{\sum_{j\in
      \set{1,\ldots,n}}{I'_j(f)}^2}\cdot
      \Parenth{\log\Parenth{\frac{d}{\sum_{j\in\set{1,\ldots,n}}I'_j(f)^2}}}^{d-1}
      \\ =& \frac 5d \cdot \Parenth{\frac{2B(p)\cdot
      e}{d-1}}^{d-1}\cdot \weight(f) \cdot
      \Parenth{\log\Parenth{\frac{d}{\weight(f)}}}^{d-1}.
\end{split}
\end{align}
The combination of (\ref{eq:18}) and (\ref{eq:13}) completes the proof.

\section{Proof of Theorem~\ref{thm:biased-main}}
\label{Sec:Theorem-Proof}

Let $f:\set{0,1}^n\to \{0,1\}$ be a function, and let
$\weight=\weight(f)=p(1-p) \cdot \sum_{j=1}^n I_j(f)^2$. Our goal
is to show that
\begin{align}
  \label{eq:19}
\stab_\epsilon(f) \leq (6e+1)
\weight^{\alpha\cdot \epsilon},
\quad\text{where}\qquad \alpha=\frac{1}{\epsilon+\log(2B(p)e) +
  3\log\log(2B(p)e)}.
\end{align}

\noindent Recall that by Claim~\ref{Claim2.1}, we have
\begin{align}
  \label{eq:20}
\stab_{\epsilon}(f)= \sum_{S\neq\emptyset}
(1-\epsilon)^{|S|}\hat{f}(S)^2.
\end{align}

\noindent For some $L$ that we choose later, we write
\begin{align}
  \label{eq:22}
\stab_{\epsilon}(f)=\sum_{0<|S|\leq L}
(1-\epsilon)^{|S|}\hat{f}(S)^2 + \sum_{|S|>L}   (1-\epsilon)^{|S|}\hat{f}(S)^2,
\end{align}
and bound each of the terms separately.

\paragraph{Bounding the high degrees term.} The second term in
\eqref{eq:22} is dominated by the powers of $(1-\epsilon)$. Since
$||f||_2^2\leq 1$, we have from Parseval's identity that
\begin{align}
  \label{eq:23}
  \sum_{|S|> L}   (1-\epsilon)^{|S|}\hat{f}(S)^2 \leq
  (1-\epsilon)^{L+1} \cdot \sum_{|S|> L}   \hat{f}(S)^2 \leq (1-\epsilon)^{L+1}.
\end{align}

\paragraph{Bounding the low degrees term.} Here we neglect the powers
of $(1-\epsilon)$ and use Lemma~\ref{Lemma-biased-main} to bound the
Fourier coefficients of degree $d$ for each $1<d\leq L$ (for $d=1$
we use Equation~(\ref{Eq:Inf-Fourier1})).
\begin{align}
  \label{eq:24}
  \sum_{0<|S|\leq L}
(1-\epsilon)^{|S|}\hat{f}(S)^2 &\leq \sum_{d=1}^{L}\sum_{|S|=d}
\hat{f}(S)^2 \leq \weight +\sum_{d=2}^{L} \frac{5e}d \cdot \Parenth{\frac{2B(p)\cdot
      e}{d-1}}^{d-1}\cdot\weight\cdot \Parenth{\log\Parenth{\frac{d}{\weight}}}^{d-1}.
\end{align}
Assume that $L\leq B(p)\log(1/\weight)$. In that case for any $d$,
$1<d<L$, the ratio between the $(d+1)$ term and the $d$ term in
\eqref{eq:24} is bounded from below by $2$. Indeed, this ratio is
\begin{equation*}
\begin{split}
  \frac{d}{d+1} \cdot 2B(p)\cdot e \cdot \Parenth{\frac{d-1}{d}}^{d-1}
  \cdot\frac1d \cdot \Parenth{
    \frac{\log\Parenth{\nfrac{(d+1)}{\weight}}}
    {\log\Parenth{\nfrac{(d)}{\weight}}}}^{d-1} \cdot
  \log\Parenth{\frac{d+1}{\weight}} \\\geq \frac{2B(p)\cdot e}{d+1}
  \cdot \frac1e \cdot 1\cdot\log\Parenth{\frac{d+1}{\weight}} \geq
  \frac{2}{\log\Parenth{1/\weight}}\cdot
  \log\Parenth{\frac{d+1}{\weight}} \geq 2.
\end{split}
\end{equation*}
We can thus replace the sum in~\eqref{eq:24} by twice its last term,
thereby getting
\begin{align}\label{eq:25}
  \sum_{0<|S|\leq L}
(1-\epsilon)^{|S|}\hat{f}(S)^2 &\leq \weight + \frac{10e}L \cdot \Parenth{\frac{2B(p)\cdot
      e}{L-1}}^{L-1}\cdot\weight\cdot \Parenth{\log\Parenth{\frac{L}{\weight}}}^{L-1}.
\end{align}

\paragraph{Choosing the value of $L$.} We choose the value of $L$ such that the bound on
$\stab_\epsilon(f)$ obtained from (\ref{eq:22}), (\ref{eq:23}) and
(\ref{eq:25}) is minimized.  We thus take
\begin{align}\label{eq:26}
  L=\alpha\cdot\log(1/\weight),\ \ \text{ where
  }\quad\alpha=\frac1{\epsilon+\log(2B(p)e) + 3\log\log(2B(p)e)}.
\end{align}
Theorem~\ref{thm:biased-main} is obtained immediately from the following
claim.
\begin{claim}\label{sec:choosing-value-l}
  For $\alpha$ and $L$ as chosen in~(\ref{eq:26}), it holds that
\begin{align*}
(\ref{eq:23}) \leq  \weight^{\alpha\cdot\epsilon},
\end{align*}
and
\begin{align*}
  (\ref{eq:25})\leq  6e\cdot \weight^{\alpha\cdot\epsilon}.
\end{align*}
\end{claim}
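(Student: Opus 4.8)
The plan is to verify the two inequalities in Claim~\ref{sec:choosing-value-l} by direct substitution of the chosen value $L=\alpha\log(1/\weight)$, treating them as self-contained calculus estimates.

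For the bound on~(\ref{eq:23}), I would write $(1-\epsilon)^{L+1}\leq e^{-\epsilon L}=\exp(-\epsilon\alpha\log(1/\weight))=\weight^{\alpha\epsilon}$, using $1-\epsilon\leq e^{-\epsilon}$ and the definition of $L$. This direction is immediate and requires only that $L\geq 0$.

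For the bound on~(\ref{eq:25}), the work is to show that
\[
\weight + \frac{10e}{L}\cdot\Parenth{\frac{2B(p)e}{L-1}}^{L-1}\cdot\weight\cdot\Parenth{\log\Parenth{\frac{L}{\weight}}}^{L-1}\leq 6e\cdot\weight^{\alpha\epsilon}.
\]
Since $\weight\leq 1$ and $\alpha\epsilon<1$, the first term $\weight\leq\weight^{\alpha\epsilon}$ is harmless; the task reduces to bounding the second term by (say) $(6e-1)\weight^{\alpha\epsilon}$, or more conveniently by $5e\cdot\weight^{\alpha\epsilon}$ and absorbing slack. Taking logarithms, the dominant contribution comes from comparing $(L-1)\log\bigl(\tfrac{2B(p)e\log(L/\weight)}{L-1}\bigr)$ against $(1-\alpha\epsilon)\log(1/\weight)$. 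With $L-1\approx\alpha\log(1/\weight)$ and $\log(L/\weight)\approx\log(1/\weight)$ (the additive $\log L$ is lower order), the bracket is roughly $\tfrac{2B(p)e}{\alpha}$, so the term is about $\weight^{-\alpha\log(2B(p)e/\alpha)}$ times $\weight$. One then checks that $1-\alpha\log\bigl(2B(p)e/\alpha\bigr)\geq\alpha\epsilon$, i.e. $\alpha\bigl(\epsilon+\log(2B(p)e)-\log\alpha\bigr)\leq 1$; since $\alpha=1/(\epsilon+\log(2B(p)e)+3\log\log(2B(p)e))$, this amounts to $-\log\alpha\leq 3\log\log(2B(p)e)$, i.e. $\alpha\geq 1/(\log(2B(p)e))^3$, which follows from the definition of $\alpha$ for $B(p)$ bounded away from the regime where $\log(2B(p)e)$ is small. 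I would carry the $\tfrac{10e}{L}$ prefactor and the $\log L$ correction terms explicitly enough to confirm they only improve the bound, using $L\geq 1$ and crude estimates like $\log(L/\weight)\leq\log(1/\weight)\cdot(1+o(1))$.

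The main obstacle is handling the lower-order terms cleanly: the additive $\log L$ inside $\log(L/\weight)$, the polynomial prefactor $10e/L$, and the discrepancy between $L$ and $L-1$ all contribute multiplicative corrections to the exponent of $\weight$, and one must verify that the slack built into $\alpha$ (the $3\log\log(2B(p)e)$ term in the denominator) is large enough to swallow all of them simultaneously, uniformly over the allowed range of $p$ and $\epsilon$. I expect this to be a somewhat delicate but routine chain of inequalities, and would organize it by first reducing to the exponent comparison via logarithms, then bounding each correction term against a fixed fraction of the available slack.
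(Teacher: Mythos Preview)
Your approach is essentially identical to the paper's: both reduce the second bound to the exponent comparison $1-\alpha\log(2B(p)e/\alpha)\geq\alpha\epsilon$, equivalently $1/\alpha\leq(\log(2B(p)e))^3$, after separating off the first term via $\weight\leq\weight^{\alpha\epsilon}$. Two small remarks: your hedge ``for $B(p)$ bounded away from the regime where $\log(2B(p)e)$ is small'' is unnecessary, since $B(p)\geq 1$ for all $p$ gives $\log(2B(p)e)\geq\log(2e)>1.5$, and the paper verifies the needed inequality $t^3\geq t+3\log t+\epsilon$ on $t\geq 1.5$; and the lower-order terms do not all ``improve the bound'' (the $\log L$ inside $\log(L/\weight)$ hurts), but the paper handles this cleanly by bounding $\bigl(1+\tfrac{\log L}{\log(1/\weight)}\bigr)^{L-1}\leq\bigl(1+\tfrac{\log L}{\log(1/\weight)}\bigr)^{\log(1/\weight)}\leq L$, which then cancels against the $1/L$ prefactor---a slightly sharper bookkeeping than your proposed $(1+o(1))$ estimate, though yours would also go through.
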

\begin{proof}
The first inequality follows since
\begin{align*}
(\ref{eq:23}) \leq  (1-\epsilon)^{L}\leq \exp(-\epsilon\cdot
\alpha\cdot \log(1/\weight)) = \weight^{\alpha\cdot\epsilon}.
\end{align*}
To bound (\ref{eq:25}), we first note that
\begin{align*}
  (\ref{eq:25}) =& \weight+\frac{10e}L \cdot \Parenth{\frac{2B(p)\cdot
      e}\alpha}^{L-1}\cdot\Parenth{\frac{\alpha}{L}}^{L-1} \cdot \Parenth{\frac{L}{L-1}}^{L-1}
  \cdot\weight\cdot \Parenth{\log\Parenth{\frac{L}{\weight}}}^{L-1} \\
  \leq &\weight + \frac{10e^2}{L}
  \cdot \Parenth{\frac{1}{\log\Parenth{\nfrac1\weight}}}^{L-1}\cdot \Parenth{\log\Parenth{\frac{L}{\weight}}}^{L-1}
  \cdot \brac{\weight\cdot \Parenth{\frac{2B(p)\cdot
      e}\alpha}^{L-1}} \\ =
& \weight + \frac{10e^2}{L}
  \cdot \Parenth{1+\frac{\log\Parenth{L}}{\log\Parenth{\nfrac1\weight}}
    }^{L-1}
  \cdot \brac{\weight\cdot \Parenth{\frac{2B(p)\cdot
      e}\alpha}^{L-1}} \\
\leq
& \weight + \frac{10e^2}{L}
  \cdot \Parenth{1+\frac{\log\Parenth{L}}{\log\Parenth{\nfrac1\weight}}
    }^{\log(1/\weight)}
  \cdot \brac{\weight\cdot \Parenth{\frac{2B(p)\cdot
      e}\alpha}^{L-1}} \\
\leq &
\weight + 10e^2
  \cdot \brac{\weight\cdot \Parenth{\frac{2B(p)\cdot
      e}\alpha}^{L-1}},
\end{align*}
where the inequality before last holds because $\alpha\leq1$ (since
$B(p)\geq 1$ for all $p$).

\noindent Since $\weight\leq \weight^{\alpha\cdot\epsilon}$, to finish the claim
it remains to prove that
\begin{align}
  \label{eq:27}
  \weight\cdot \Parenth{\frac{2B(p)\cdot
      e}\alpha}^{L-1}\leq \frac1{2e}\cdot \weight^{\alpha\cdot\epsilon}.
\end{align}
Note that
\begin{align*}
  \weight\cdot \Parenth{\frac{2B(p)\cdot
      e}\alpha}^{L-1}\leq &\frac1{2B(p)\cdot e} \cdot  \weight\cdot \Parenth{\frac{2B(p)\cdot
      e}\alpha}^{\alpha\cdot\log(1/\weight)}\\
  = &\frac1{2B(p)\cdot e}\cdot \weight\cdot \Parenth{\weight}^{-\alpha\cdot\log\Parenth{\frac{2B(p)\cdot
      e}{\alpha}}} \\ \leq& \frac1{2 e}\cdot {\weight}^{1-\alpha\cdot\log\Parenth{\frac{2B(p)\cdot
      e}{\alpha}}}.
\end{align*}
Hence it is sufficient to show that the exponent above is higher than
$\alpha\cdot \epsilon$.  Since
\begin{align*}
  1-\alpha\cdot\log\Parenth{\frac{2B(p)\cdot
      e}{\alpha}} =& \frac{\epsilon+3\log\log(2B(p)\cdot
    e)-\log(1/\alpha)}{\epsilon+\log(2B(p)\cdot
    e)+3\log\log(2B(p)\cdot e)} \\ =& \alpha\cdot \epsilon +\frac{3\log\log(2B(p)\cdot
    e)-\log(1/\alpha)}{\epsilon+\log(2B(p)\cdot
    e)+3\log\log(2B(p)\cdot e)},
\end{align*}
we actually need to prove that
$3\log\log(2B(p)\cdot
    e)-\log(1/\alpha)\geq 0$. Substituting the value of $\alpha$ and
    simplifying, this reduces to
    \begin{align}\label{eq:28}
      \epsilon+\log(2B(p)\cdot e)+3\log\log (2B(p)\cdot e)\leq \Parenth{\log(2B(p)\cdot e)}^3.
    \end{align}
It is easy to verify that the function
$t^3-t-3\log t-\epsilon$ is monotone increasing in $t$ for every
$t\geq 1.5$, and for $\epsilon\leq 1$ its value for $t=1.5$ is
positive. Hence, since $\log
(2B(p)e)\geq 1.5$, (\ref{eq:28}) follows. This completes the proof of
Claim~\ref{sec:choosing-value-l}, and thus of Theorem~\ref{thm:biased-main}.
\end{proof}

\medskip

\noindent Theorem~\ref{thm:main} follows immediately from Theorem~\ref{thm:biased-main},
substituting in Equation~(\ref{eq:19}) $B(p)=1$ for $p=1/2$ and bounding
$\epsilon$ from above by $1$.

\section{Tightness of Lemma~\ref{Lemma-biased-main} and
  Theorem~\ref{thm:biased-main}}
\label{Sec:Tightness}

In this section we examine a variant of the ``tribes'' function
presented in~\cite{BenorLinial:90}. We show that the assertions of
Lemma~\ref{Lemma-biased-main} and Theorem~\ref{thm:biased-main}
are essentially tight for this function.

\medskip

\noindent The tribes function over $n$ coordinates with tribes of
size $r$ is defined as follows: we partition $n$ coordinates into
sets (tribes) of size $r$ each, and let the tribes function assume
the value $1$ if in at least one tribe all the coordinates are
equal to $1$, and $0$ otherwise. In order to make our function
approximately balanced with respect to the biased measure $\mu_p$,
we take $f$ to be a tribes function with tribes of size
\[
r=\frac{\log n - \log \log n + \log \log (1/p)}{\log(1/p)}.
\]
This variant of the tribes function was first considered
in~\cite{Talagrand:94}. It is easy to see that for this choice of
$r$ we have $\mathbb{E}_{x \sim \mu_p}[f(x)] \approx 1-1/e$. In
the following computations we use the symbol $\approx$ to denote
equality up to constant factors.

\subsection{Tightness of Lemma~\ref{Lemma-biased-main}}

\comment{Do we want to write something about the tightness for
$p=1/2$?}

To simplify the computations, we add a restriction on the range of
parameters we consider. We require that
\begin{equation}\label{Eq:Thesis-Rest1}
d \leq \min(1/p, \sqrt{r}, \log n / \log \log n).
\end{equation}
We note that a wide range of combinations of the parameters
satisfies this restriction. For example, if $p = 1/\log n$,
then~(\ref{Eq:Thesis-Rest1}) holds for all $d \leq \sqrt{\log n
/(2 \log \log n)}$.

\paragraph{Evaluating the right hand side of (\ref{eq:29}).}
For each $x \in \{0,1\}^n$, we have $f(x)\neq f(x \oplus e_i)$ if
and only if in the tribe of $i$, all the coordinates of $x$ other
than $x_i$ are ones, and in each of the other tribes, at least one
of the coordinates is zero. Thus, the influences of $f$ are:
\[
I_i(f)=\Pr[f(x)\neq f(x \oplus e_i)] = p^{r-1} (1-p^r)^{(n/r)-1}
\approx p^{r-1} = \frac{\log n}{p \log (1/p) \cdot n}.
\]
Summing over the values of $i$, we get
\begin{equation}
\weight(f)=p(1-p) \sum_{i=1}^n I_i(f)^2 \approx \frac{(\log
n)^2}{p (\log(1/p))^2 \cdot n}. \label{Eq6.1}
\end{equation}
Since by Assumption~(\ref{Eq:Thesis-Rest1}), $d \leq \frac{\log
n}{\log(1/p)}$, we have
\begin{equation}
\Parenth{\log \Parenth{\nfrac{1}{\weight(f)}}}^{d-1} \approx
\Parenth{(1-\frac{\log(1/p)}{\log n}) \log n}^{d-1} \approx (\log
n)^{d-1}. \label{Eq6.2}
\end{equation}
It follows that for our function $f$, the right hand side of
(\ref{eq:29}) is approximately
\begin{align}
\label{eq:30} \frac{5e}d \cdot \Parenth{\frac{2B(p)\cdot
e}{d-1}}^{d-1} \cdot \frac{(\log n)^{d+1}}{p (\log (1/p))^2 \cdot
n}.
\end{align}
Finally, since by Equation~(\ref{Eq:B(p)}), $B(p) \leq
\frac{1}{2p(1-p)\log((1-p)/p)}$, and since by
Assumption~(\ref{Eq:Thesis-Rest1}), $d \leq 1/p$, the right hand
side of (\ref{eq:29}) is at most
\begin{align}\label{eq:300}
\frac{5e}d \cdot \Parenth{\frac{e}{(d-1) \cdot
p(1-p)\log((1-p)/p)}}^{d-1} \cdot \frac{(\log n)^{d+1}}{p (\log
(1/p))^2 \cdot n} \approx \Parenth{\frac{e}{d}}^d \cdot
\frac{(\log n)^{d+1}}{p^{d} \cdot (\log (1/p))^{d+1} \cdot n}.
\end{align}

\paragraph{Evaluating the left hand side of (\ref{eq:29}).}
We compute a lower bound on the l.h.s. of (\ref{eq:29}) by
considering only part of the $d$-th level Fourier-Walsh
coefficients of $f$. We compute the coefficients of the form $\hat
f(\{i_1,\ldots,i_d\})$ where $\{i_1,\ldots,i_d\}$ belong to the
same tribe, the idea being that these are the dominant $d$-th
level coefficients.  We want to compute
\[
\hat f(\{i_1,\ldots,i_d\})= \expect[ f\cdot
\ch_{\{i_1,\ldots,i_d\}}].
\]
We divide $\{0,1\}^n$ into structures of $2^d$ values each
according to the coordinates $\{i_1,\ldots,i_d\}$. Note that a
structure does not contribute to $\expect [f\cdot
\ch_{\{i_1,\ldots,i_d\}}]$ if the value of $f$ on all the elements
of the structure is the same. Since $\{i_1,\ldots,i_d\}$ are all
in the same tribe, the only case in which $f$ is not constant on a
structure is when in the tribe containing $\{i_1,\ldots,i_d\}$,
all the other coordinates are ones, and in each of the other
tribes there is at least one zero element. For such structures,
$f$ assumes the value 1 only when all the coordinates
$\{i_1,\ldots,i_d\}$ are ones. Hence,
\begin{equation}
\expect[ f\cdot \ch_{\{i_1,\ldots,i_d\}}] =
\Parenth{-\sqrt{(1-p)/p}}^d p^r (1-p^r)^{(n/r)-1} \approx (-1)^d
p^{-d/2} p^r \approx (-1)^d p^{-d/2} \cdot \frac{\log n}{\log
(1/p) \cdot n}. \label{Eq6.4}
\end{equation}
The number of $d$-th level coefficients of this type is
$\frac{n}{r} {{r}\choose{d}}$. Since by
Assumption~(\ref{Eq:Thesis-Rest1}), we have $d \leq \sqrt{r}$ and
$d \leq \frac{\log \log n}{\log n}$, it follows that
\[
\frac{n}{r} {{r}\choose{d}} \geq \frac{n}{r} \frac{(r-d)^d}{d!}
\approx \frac{nr^{d-1}}{d!} \approx \frac{n (\log n)^{d-1}}{d!
(\log(1/p))^{d-1}}.
\]
Therefore, using Stirling's approximation $d! \approx \sqrt{2 \pi
d} (d/e)^d$, we get the following lower bound on the left hand
side of (\ref{eq:29}):
\begin{align}
\sum_{|S|=d} \hat f(S)^2 \gtrapprox& \frac{n (\log n)^{d-1}}{d!
(\log(1/p))^{d-1}} \cdot \frac{(\log n)^2}{p^d \cdot (\log
(1/p))^2 \cdot n^2} \approx \Parenth{\frac{e}{d}}^d \frac{(\log
n)^{d+1}}{\sqrt{d} \cdot p^d \cdot (\log(1/p))^{d+1} \cdot n}.
\label{Eq6.3}
\end{align}
Comparing expressions~(\ref{eq:300}) and~(\ref{Eq6.3}) shows that
the assertion of Lemma~\ref{Lemma-biased-main} is tight up to
factor $c\sqrt{d}$, which is small compared to the other terms in
the expressions in both sides of Equation~(\ref{eq:29}).

\subsection{Tightness of Theorem~\ref{thm:biased-main}}

\noindent In order to show that the assertion of
Theorem~\ref{thm:biased-main} is tight up to a constant factor in
the exponent, we have to prove that
\begin{equation}\label{Eq:Thesis-Tigh0}
\log \stab_\epsilon(f) \approx \alpha(\epsilon) \cdot \epsilon
\cdot \log \weight(f).
\end{equation}
To simplify the computations, we assume that $p \leq 1/\log n$, and in
particular we have $r \leq 1/p$ (we also deal separately with the
uniform measure case below).

\medskip

\noindent We compute a lower bound on $(\log \stab_\epsilon(f))$
by considering part of the $r$-th level Fourier-Walsh coefficients
of $f$. By Formula~(\ref{Eq6.4}), each of the coefficients $\hat
f(\{i_1,i_2,\ldots,i_r\})$ corresponding to a full tribe equals
\[
\Parenth{-\sqrt{(1-p)/p}}^r p^r (1-p^r)^{(n/r)-1} \approx (-1)^r
p^{-r/2} p^r =(-1)^r p^{r/2}.
\]
The number of coefficients of this form is $n/r$. Thus,
\[
\sum_{|S|=r} \hat f(S)^2 \gtrapprox \frac{np^r}{r} \approx \frac{n
\log n}{\log (1/p) \cdot n \cdot r} = \Theta(1).
\]
Hence, by Claim~\ref{Claim2.1},
\[
\stab_{\epsilon}(f) \geq \sum_{|S|=r} \hat f(S)^2 (1-\epsilon)^r
\gtrapprox (1-\epsilon)^r,
\]
and therefore,
\begin{equation}\label{Eq:Thesis-Tigh1}
\log(\stab_{\epsilon}(f)) \gtrapprox \frac{\log(1-\epsilon) \log
n} {\log(1/p)}.
\end{equation}
On the other hand, using Formula~(\ref{Eq6.2}) and the
approximation $\log (2B(p)e) \approx \log(1/p)$, we get
\begin{equation}\label{Eq:Thesis-Tigh2}
\alpha(\epsilon) \cdot \epsilon \cdot \log \weight(f) \approx
\frac{ -\epsilon \log n}{\log(1/p)}.
\end{equation}
Comparing Formulas~(\ref{Eq:Thesis-Tigh1})
and~(\ref{Eq:Thesis-Tigh2}) yields
Formula~(\ref{Eq:Thesis-Tigh0}). Moreover, it can be seen from the
proof that the exponent in the assertion of
Theorem~\ref{thm:biased-main} is tight up to factor
\[
(1+o(1))\frac{\epsilon}{-\log(1-\epsilon)},
\]
which tends to $1$ for small $\epsilon$.

\subsubsection{The Uniform Measure Case}
A similar computation shows the tightness up to constant factor in
the exponent in the case $p=1/2$ (namely, tightness of
Theorem~\ref{thm:main}). This time we compute a lower bound on
$(\log \stab_\epsilon(f))$ by considering the Fourier-Walsh
coefficients of $f$ all of whose coordinates are contained in the
same tribe. It is easy to check that for $p=1/2$, all such
coefficients are of order $\log_2 n /n$. For each of the $n/r$ tribes,
there are $2^r$ coefficients that correspond to its subsets, and thus,
\[
\sum_{\mbox{ S contained in a tribe }} \hat f(S)^2 \approx
\frac{n}{r} \cdot 2^r \cdot \frac{(\log_2 n)^2}{n^2} = \Theta(1).
\]
Hence, like in the previous case we get
\[
\stab_{\epsilon}(f) \geq \sum_{\mbox{ S contained in a tribe }}
\hat f(S)^2 (1-\epsilon)^r \gtrapprox (1-\epsilon)^r,
\]
and the rest of the proof is the same as in the previous case.

\medskip

We note that in this computation, the value of $C$ in the exponent tends
to $1$ as $\epsilon \rightarrow 0$. A refined computation can probably
improve the value of the constant, but we won't be able to match the value
$C=.234$ asserted in Theorem~\ref{thm:main}, since it follows from Corollary~12
in~\cite{MosselOdonnell-noise-sensitivity} that for the tribes function with $p=1/2$, we have
\[
\stab_{\epsilon}(f) \lessapprox \weight(f)^{\log_2 (e)/2} = \weight(f)^{.721}.
\]

\remove{

\paragraph{Evaluating the influences.} To evaluate the influence of
the $i$'th coordinate, we divide the discrete cube into pairs of the
form $(x,x \oplus e_i)$. Such a pair has non-zero contribution to
$I_i(f)$ if and only if $f(x)\neq f(x \oplus e_i)$, which happens when
in the tribe of $i$, all the elements of $x$ other than $x_i$ are
ones, and in each of the other tribes, at least one of the coordinates
is zero. The probability of this event is
\[
p^{r-1} (1-p^r)^{(n/r)-1} \approx p^{r-1} (1/e) = \frac{\log_{1/p}
  n}{epn}.
\]
Hence the influence is
\[
I_i(f)\approx \frac{\log_{1/p} n}{epn} \sqrt{p\cdot(1-p)} \approx
p^{-1/2} \cdot \frac{\log_{1/p} n}{en}.
\]
Summing over the values of $i$, we get
\begin{equation}
  \weight=\sum_{i=1}^n I_i(f)^2 \approx n p^{-1} \cdot\frac{(\log_{1/p}
    n)^2}{(en)^2} = \frac{1}{e^2 np} \cdot(\log_{1/p} n)^2.
  \label{Eq6.1}
\end{equation}
Since by the assumption, $p \geq n^{-\beta_0}$, we have
\begin{equation}
  \log \Parenth{\nfrac{1}{\weight}} \approx (1-\log_{1/p}n)\cdot \log n
  = (1-\log_{1/p}n) \log_{1/p} (n)\cdot  \log (1/p).
  \label{Eq6.2}
\end{equation}
It follows for our function $f$, the appropriate value for the right
hand side of (\ref{eq:29}) is roughly at least
\begin{align}
  \label{eq:30}
  \frac{5e}d \cdot \Parenth{\frac{2B(p)\cdot e}{d-1}}^{d-1}\cdot
  \frac{1}{e^2 np} \cdot(\log_{1/p} n)^2 \cdot \Parenth{ (1-\log_{1/p}n)
    \log_{1/p} (n) \cdot\log (1/p)}^{d-1}
\end{align}

\paragraph{Evaluating degree $d$ coefficients.}
We compute some of the $d$-th level Fourier coefficients. We compute
only the coefficients of the form $\hat f(\{i_1,\ldots,i_d\})$ where
$\{i_1,\ldots,i_d\}$ belong to the same tribe, the idea
being that these are the dominant degree $d$ coefficients.  We want to
compute
\[
\hat f(\{i_1,\ldots,i_d\})= \expect[ f\cdot \ch_{\{i_1,\ldots,i_d\}}].
\]
This time we divide $\{0,1\}^n$ into structures of $2^d$ values each
according to the coordinates $\{i_1,\ldots,i_d\}$. As in the
computation of the influences, a structure does not contribute to
$\expect [f\cdot \ch_{\{i_1,\ldots,i_d\}}]$ if the value of $f$ on all
the elements of the structure is the same. Since $\{i_1,\ldots,i_d\}$
are all in the same tribe, the only case in which $f$ is not constant
on a structure is when in the tribe containing $\{i_1,\ldots,i_d\}$, all the other
coordinates are ones, and in each of the other tribes there is at least one
zero element. For such structures, $f$ assumes the value 1 only when
all the $d$ coordinates of the structure are ones.  Hence
\begin{equation}
  \expect[ f\cdot \ch_{\{i_1,\ldots,i_d\}}] = \Parenth{\sqrt{(1-p)/p}}^d p^r
  (1-p^r)^{(n/r)-1} \approx p^{-d/2} p^r (1/e) = p^{-d/2}
  \cdot \frac{\log_{1/p} n}{en}.
  \label{Eq6.4}
\end{equation}
The number of $d$-th level coefficients of this type is
\[
\frac{n}{r} {{r}\choose{d}} \geq \frac{n}{r} \frac{(r-d)^d}{d!}
\approx \frac{n}{d!} \cdot\Parenth{1-\frac{d}{r}}^d \cdot (\log_{1/p} n)^{d-1}.
\]
Therefore, we have the following lower bound on the Fourier weight on
the $d$-th level, namely on the left hand side of (\ref{eq:29}):
\begin{align}
  \begin{split}
  \sum_{|S|=d} \hat f(S)^2 \geq& \frac{n}{d!} \cdot \Parenth{1-\frac{d}{r}}^d \cdot \Parenth{\log_{1/p} n}^{d-1}
  \Parenth{p^{-d/2} \cdot \frac{\log_{1/p} n}{en}}^2 \\=&
  \frac{  \Parenth{1-\frac{d}{r}}^d\cdot \parenth{\log_{1/p} n}^{d+1}}{e^2\cdot p^d\cdot d!\cdot n}.
  \end{split}
  \label{Eq6.3}
\end{align}
Now we compute the upper bound of the $d$-th level weight asserted by
Lemma~\ref{Lemma-Main}. It can be easily seen that for all $p \leq
1/2$, we have $B(p) \leq 1/(p \log(1/2p))$. Hence,
\[
\sum_{|T|=d} \hat f(T)^2 \leq 17e \Big(\frac{256e \cdot B(p)}{d}
\Big)^{d-1} \sum_{k \leq n} \hat f(\{k\})^2 \Big(\log
\Big(\frac{1}{\sum_{k \leq n} \hat f(\{k\})^2} \Big) \Big)^{d-1} \leq
\]
\[
\leq 17e (\frac{1}{p \log (1/2p)})^{d-1} (\frac{256e}{d})^{d-1}
\weight (\log (1/\weight))^{d-1} \approx
\]
\[
\approx 17e (\frac{256e}{d})^{d-1} (\frac{1}{p \log (1/2p)})^{d-1}
\frac{1}{e^2 np} (\log_{1/p} n)^2 (c(\beta) \log_{1/p} n \log
(1/p))^{d-1} \approx
\]
\begin{equation}
  \approx 17e (\frac{256e}{d})^{d-1} p^{-d} n^{-1} (\log_{1/p} n)^{d+1}
  c(\beta)^{d-1}.
  \label{Eq6.4}
\end{equation}
Since $\beta$ is constant, Inequalities~(\ref{Eq6.3})
and~(\ref{Eq6.4}) show that the assertion of Lemma~\ref{Lemma-Main} is
tight in our case, up to the value of the constant $256e c(\beta)$,
which should be possibly replaced by $e$ (due to Stirling's
approximation $n! \approx \sqrt{2\pi n} (n/e)^n$).

\bigskip

\noindent In order to show the tightness of
Theorem~\ref{Theorem-Noise}, we compute a lower bound on the $r$-th
level weight of our balanced tribes function.  By
Equation~(\ref{Eq6.4}), each of the coefficients $\hat
f(\{i_1,i_2,\ldots,i_r\})$ corresponding to a full tribe equals
approximately $p^{-r/2} p^r /e = p^{r/2}/e$. The number of
coefficients of this form is exactly $n/r$. Hence, the weight on the
$r$-th level satisfies:
\[
\sum_{|T|=r} \hat f(T)^2 \geq \frac{n}{e^2 r} p^r \approx \frac{n}{e^2
  \log_{1/p} n} \frac{\log_{1/p} n}{n} = e^{-2}.
\]
Hence, by Lemma~\ref{BKS-Proposition2.2},
\[
\phi(f,\epsilon) \geq \Var(f,\epsilon)/2 \geq \frac{1}{2} \sum
_{|T|=r} \hat f(T)^2 (1-\epsilon)^r \geq \frac{1}{2e^2} \exp \Big(\log
(1-\epsilon) (\log_{1/p} n - \log_{1/p} \log_{1/p} n) \Big) \geq
\]
\[
\geq \frac{1}{2e^2} \exp(-2 \epsilon \log_{1/p} n),
\]
where the last inequality holds since for all $\epsilon<1/2$, we have
$\log(1-\epsilon)>-2 \epsilon$. Finally, by Equation~(\ref{Eq6.2}), we
have $\log(1/\weight)=c(\beta) \log_{1/p} n \log (1/p)$, and hence
\begin{equation}
  \phi(f,\epsilon) \geq \frac{1}{2e^2} \exp(-2 \epsilon \log_{1/p} n) =
  \frac{1}{2e^2} \exp \Big(-\frac{2 \epsilon \log(1/\weight)}{c(\beta) \log (1/p)}\Big) =
  \frac{1}{2e^2} \weight^{2 \epsilon / c(\beta) \log(1/p)}.
\end{equation}
This shows that Theorem~\ref{Theorem-Noise} is tight, up to the
constant factors.

}

\section{Acknowledgements}

We are grateful to Gil Kalai for encouraging us to work on this project, and
to Yuval Peres for useful suggestions.

\bibliographystyle{alpha}
\bibliography{bib1}

\section{Appendix}
\label{sec:appendix}
\comment{Remove the "section" sign here}

In this appendix we prove a decoupled variant of
Lemma~\ref{Lemma-Main}, stated as Theorem~\ref{Thm:Decoupled}
below. This variant generalizes Theorem~2.4 of~\cite{Talagrand:96},
that was used by Talagrand to establish a lower bound on the
correlation between monotone subsets of the discrete cube. While
employing the same basic idea, our proof is shorter than Talagrand's
proof, and applies also to a biased measure $\mu_p$ on the discrete
cube. For the sake of generality, we present the proof in the biased
case, thus providing a decoupled variant of
Lemma~\ref{Lemma-biased-main}. As in the proof of
Lemma~\ref{Lemma-biased-main}, we use throughout the appendix the
normalized variant of the influences: $I'_i(f)=\sqrt{p(1-p)} I_i(f)$.

%To see that this variant arises naturally in the proof, recall
%that for a monotone function $f$, we have $\hat f(\{i\}) = -
%\sqrt{p(1-p)} I_i(f) = -I'_i(f)$.

\bigskip

\noindent We start with a  generalization of Lemma~3.1
in~\cite{Talagrand:96}. The proof is a slight modification of the
proof of Lemma~\ref{Lemma-biased-main}.
\begin{lemma}
Let $f:\{0,1\}^n \rightarrow \{0,1\}$ be a function, and let
$\{I,J\}$ be a partition of $\{1,\ldots,n\}$. For $t>0$, denote
\[
L^f_t=\{j \in J: \sum_{\stackrel{T \subset I}{ |T|=m-1}} \hat f
(T,j)^2
> t \cdot I'_j(f)^2\}.
\]
Then for all $t \geq 4 \cdot \Parenth{4B(p)\cdot e}^{d-1}$,
\begin{equation}
\sum_{j \in L^f_t} I'_j(f)^2 \leq 5e \cdot (t/4)^{-\frac{1}{d-1}}
\cdot B(p) \cdot\exp\Parenth{-\frac{(d-1)}{2B(p)\cdot
      e}\cdot (t/4)^{1/(d-1)}}.
\end{equation}
\label{Lemma:First}
\end{lemma}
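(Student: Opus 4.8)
The plan is to run the proof of Lemma~\ref{Lemma-biased-main} almost verbatim up to its per-coordinate estimate~(\ref{eq:11}), and only then to diverge: rather than summing over all of $J$ and optimizing the cut-off $t_0$ globally, I will pin $t_0$ to $t$ and restrict the sum to the heavy coordinates $L^f_t$. Concretely, fix a coordinate $j\in J$ and run the argument of Section~\ref{Sec:Lemma} unchanged — set $f'_j=\sum_{T\subset I,\ |T|=d-1}\hat f(T,j)\ch_T$, normalize $f_j=f'_j/\norm{f'_j}_2$, write $\sum_{|T|=d-1}\hat f(T,j)^2=(\inner{f_j,\ch_j f})^2$, split the Fubini integral of Lemma~\ref{lemma:simple} at a threshold $t_0$, bound the low part by $2t_0^2 I'_j(f)^2$ and the high part via Cauchy--Schwarz, Theorem~\ref{Thm:Oles}, and Lemma~\ref{lemma:beckner-integral}. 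This reproduces inequality~(\ref{eq:11}):
\[
\sum_{\stackrel{T\subset I}{|T|=d-1}}\hat f(T,j)^2\ \leq\ 2t_0^2\cdot I'_j(f)^2\ +\ 10e\cdot t_0^{\,2-\frac{2}{d-1}}\cdot B(p)\cdot\exp\Parenth{-\tfrac{d-1}{2B(p)e}\,t_0^{2/(d-1)}}\cdot\expect_{x\in\set{0,1}^I}\brac{\widehat{f_x}(\set j)^2},
\]
valid for every $t_0>(4B(p)e)^{(d-1)/2}$, where $f_x(y)=f(x,y)$ and $\widehat{f_x}(\set j)=\expect_{y}[\ch_j(x,y)f(x,y)]$, exactly as in the derivation of~(\ref{eq:11}). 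I would then choose $t_0=(t/4)^{1/2}$: the hypothesis $t\geq 4(4B(p)e)^{d-1}$ is precisely what makes $t_0\geq(4B(p)e)^{(d-1)/2}$, and with this choice $2t_0^2=t/2$ and $t_0^{2/(d-1)}=(t/4)^{1/(d-1)}$.

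For $j\in L^f_t$ the left-hand side of the displayed inequality exceeds $t\cdot I'_j(f)^2$ by definition of $L^f_t$ (and $I'_j(f)>0$ for such $j$, since $I'_j(f)=0$ forces every $\hat f(T,j)$ to vanish, so there is no division issue). Subtracting $2t_0^2 I'_j(f)^2=\tfrac t2 I'_j(f)^2$ and dividing by $t/2$, using the identity $\tfrac1t(t/4)^{1-1/(d-1)}=\tfrac14(t/4)^{-1/(d-1)}$, yields
\[
I'_j(f)^2\ <\ 5e\cdot (t/4)^{-1/(d-1)}\cdot B(p)\cdot\exp\Parenth{-\tfrac{d-1}{2B(p)e}\,(t/4)^{1/(d-1)}}\cdot\expect_{x\in\set{0,1}^I}\brac{\widehat{f_x}(\set j)^2}.
\]
Summing over $j\in L^f_t$ and pulling $\expect_x$ outside, it then remains to note that $\sum_{j\in L^f_t}\widehat{f_x}(\set j)^2\leq\sum_{j\in J}\widehat{f_x}(\set j)^2\leq\norm{f_x}_2^2\leq 1$ for every $x$ — Parseval together with $f$ being Boolean, exactly as in~(\ref{eq:14}) — which gives the claimed bound. (I interpret the exponent $m-1$ in the statement as $d-1$, as in the displayed estimate.)

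The computation is routine once~(\ref{eq:11}) is in hand, so there is no genuine obstacle; the two points to watch are (i) taking $t_0^2=t/4$ rather than any other normalization, which is what simultaneously produces the exponent $(t/4)^{1/(d-1)}$ and collapses the prefactor to exactly $5e$, and (ii) the boundary case of the constraint — Lemma~\ref{lemma:beckner-integral} is stated with a strict inequality $t_0>(4B(p)e)^{(d-1)/2}$ while the hypothesis here is non-strict, which is handled by a continuity argument (the ratio estimate driving Lemma~\ref{lemma:beckner-integral} in fact still holds at $t_0=(4B(p)e)^{(d-1)/2}$, where $s_0=2(d-1)$).
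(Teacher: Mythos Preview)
Your proposal is correct and follows essentially the same approach as the paper's proof: both invoke the per-coordinate estimate~(\ref{eq:11}) from the proof of Lemma~\ref{Lemma-biased-main}, set $t_0=\sqrt{t}/2$, use the defining inequality of $L^f_t$ to convert the Fourier-weight bound into a bound on $\sum_{j\in L^f_t}I'_j(f)^2$, and close with the Parseval estimate~(\ref{eq:14}). The only cosmetic difference is that the paper first sums~(\ref{eq:11}) over $j\in L^f_t$ (obtaining the analogue of~(\ref{eq:15}) restricted to $L^f_t$) and then subtracts, whereas you subtract per coordinate and then sum; the arithmetic and the resulting constant $5e$ are identical.
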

\begin{proof}
Exactly the same argument as used in the proof of
Lemma~\ref{Lemma-biased-main} (see Equation~\ref{eq:15}) shows
that for all $t_0 \geq \Parenth{4B(p)\cdot e}^{(d-1)/2}$,
\begin{align}
  \label{eq:Appendix1}
  \sum_{j\in L^f_t}\sum_{\stackrel{T \subset I}{ |T|=d-1}} \hat{f}(T,j)^2 \leq &
  2t_0^2\cdot \sum_{j\in L^f_t}{I'_j(f)}^2
   + 10e \cdot t_0^{2-\frac{2}{d-1}}  \cdot B(p) \cdot\exp\Parenth{-\frac{(d-1)}{2B(p)\cdot
      e}\cdot t_0^{2/(d-1)}}.
\end{align}
On the other hand, by the definition of $L^f_t$ we have
\begin{align}
\label{eq:Appendix2} \sum_{j\in L^f_t}\sum_{\stackrel{T \subset
I}{ |T|=d-1}} \hat{f}(T,j)^2 \geq \sum_{j\in L^f_t} t \cdot
I'_j(f)^2 = t \cdot \sum_{j\in L^f_t} I'_j(f)^2.
\end{align}
Taking $t_0=\sqrt{t}/2$ and combining
Inequalities~(\ref{eq:Appendix1}) and~(\ref{eq:Appendix2}), we get
\begin{align}
t \cdot \sum_{j\in L^f_t} I'_j(f)^2 \leq (t/2) \cdot \sum_{j\in
L^f_t}{I'_j(f)}^2
   + 10e \cdot (t/4)^{1-\frac{1}{d-1}}  \cdot B(p) \cdot\exp\Parenth{-\frac{(d-1)}{2B(p)\cdot
      e}\cdot (t/4)^{1/(d-1)}},
\end{align}
and simplification yields the assertion.
\end{proof}

\medskip

\noindent We now present a decoupled variant of
Lemma~\ref{Lemma:First}. The proof is a series of applications of
the Cauchy-Schwarz inequality.
\begin{lemma}
\label{Lemma:First-Decoupled} Let $f,g:\{0,1\}^n \rightarrow
\{0,1\}$ be functions, such that
\begin{equation}\label{eq:Appendix3}
\sum_{j=1}^n I'_j(f)^2 \leq 1, \qquad \mbox{ and } \qquad
\sum_{j=1}^n I'_j(g)^2 \leq 1,
\end{equation}
and let $\{I,J\}$ be a partition of $\{1,\ldots,n\}$. For $t>0$,
denote
\[
L_t=\{j \in J: \sum_{\stackrel{T \subset I}{ |T|=d-1}} \hat f
(T,j) \hat g (T,j) > t \cdot I'_j(f) I'_j(g)\}.
\]
Then for all $t \geq 4 \cdot \Parenth{4B(p)\cdot e}^{d-1}$,
\begin{equation}\label{eq:Appendix4}
\sum_{j \in L_t} I'_j(f) I'_j(g) \leq 2 \cdot (5e)^{1/2} \cdot
(t/4)^{-\frac{1}{2(d-1)}}  \cdot (B(p))^{1/2}
\cdot\exp\Parenth{-\frac{(d-1)}{4B(p)\cdot
      e}\cdot (t/4)^{1/(d-1)}}.
\end{equation}
\end{lemma}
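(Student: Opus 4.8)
The plan is to reduce the ``decoupled'' estimate to the single-function estimate of Lemma~\ref{Lemma:First} by a sequence of Cauchy--Schwarz applications, exactly as the statement's own remark advertises. The starting point is to control, for each $j\in J$, the bilinear quantity $\sum_{|T|=d-1}\hat f(T,j)\hat g(T,j)$. First I would note that by Cauchy--Schwarz on the coordinates $T\subset I$ of size $d-1$,
\[
\sum_{\stackrel{T\subset I}{|T|=d-1}}\hat f(T,j)\hat g(T,j)\leq
\Parenth{\sum_{\stackrel{T\subset I}{|T|=d-1}}\hat f(T,j)^2}^{1/2}
\Parenth{\sum_{\stackrel{T\subset I}{|T|=d-1}}\hat g(T,j)^2}^{1/2}.
\]
Hence if $j\in L_t$, then for the quantity on the left to exceed $t\cdot I'_j(f)I'_j(g)$, at least one of the two factors on the right must be ``large'' relative to its influence; more precisely, for $j\in L_t$ we must have either $\sum_{|T|=d-1}\hat f(T,j)^2 > \sqrt t\cdot I'_j(f)^2$ or $\sum_{|T|=d-1}\hat g(T,j)^2 > \sqrt t\cdot I'_j(g)^2$, i.e.\ $L_t\subseteq L^f_{\sqrt t}\cup L^g_{\sqrt t}$, where $L^f_{\cdot},L^g_{\cdot}$ are the level sets from Lemma~\ref{Lemma:First}.

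\textbf{Key steps.} With the set inclusion $L_t\subseteq L^f_{\sqrt t}\cup L^g_{\sqrt t}$ in hand, I would bound
\[
\sum_{j\in L_t}I'_j(f)I'_j(g)\leq \sum_{j\in L^f_{\sqrt t}\cup L^g_{\sqrt t}}I'_j(f)I'_j(g)
\leq \sum_{j\in L^f_{\sqrt t}}I'_j(f)I'_j(g)+\sum_{j\in L^g_{\sqrt t}}I'_j(f)I'_j(g),
\]
and then apply Cauchy--Schwarz over $j$ to each summand, e.g.
\[
\sum_{j\in L^f_{\sqrt t}}I'_j(f)I'_j(g)\leq
\Parenth{\sum_{j\in L^f_{\sqrt t}}I'_j(f)^2}^{1/2}\Parenth{\sum_{j=1}^n I'_j(g)^2}^{1/2}
\leq \Parenth{\sum_{j\in L^f_{\sqrt t}}I'_j(f)^2}^{1/2},
\]
using the normalization $\sum_j I'_j(g)^2\leq 1$ from~(\ref{eq:Appendix3}); symmetrically for the $g$-term. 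Now I would invoke Lemma~\ref{Lemma:First} with threshold $\sqrt t$ (valid since $t\geq 4(4B(p)e)^{d-1}$ forces $\sqrt t\geq 2(4B(p)e)^{(d-1)/2}\geq 4(4B(p)e)^{d-1}$ only if \dots — here I need to be slightly careful about which threshold Lemma~\ref{Lemma:First} wants; it asks $t\geq 4(4B(p)e)^{d-1}$, so I should check $\sqrt t$ meets that, which it does precisely when $t\geq 16(4B(p)e)^{2(d-1)}$, so I may need to either restate with this stronger hypothesis or, more cleanly, observe that Lemma~\ref{Lemma:First}'s conclusion with $t_0=t^{1/4}/2$ in its internal argument goes through for $t\geq 4(4B(p)e)^{d-1}$ directly). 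Taking the square root of the bound in Lemma~\ref{Lemma:First} evaluated at $\sqrt t$ gives
\[
\Parenth{\sum_{j\in L^f_{\sqrt t}}I'_j(f)^2}^{1/2}\leq
(5e)^{1/2}\cdot (\sqrt t/4)^{-\frac{1}{2(d-1)}}\cdot (B(p))^{1/2}\cdot
\exp\Parenth{-\frac{(d-1)}{4B(p)e}\cdot(\sqrt t/4)^{1/(d-1)}},
\]
and the same for $g$. Adding the two contributions produces a factor $2$ in front, and up to the bookkeeping of replacing $\sqrt t/4$ by $t/4$ (which only strengthens the bound in the exponent and changes the polynomial prefactor by a constant absorbed in the stated constants) this is exactly~(\ref{eq:Appendix4}).

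\textbf{Main obstacle.} The only genuine subtlety is matching the exponents: the single-function lemma is stated with threshold $t$ producing $\exp(-\frac{d-1}{2B(p)e}(t/4)^{1/(d-1)})$, whereas the decoupled conclusion has $\exp(-\frac{d-1}{4B(p)e}(t/4)^{1/(d-1)})$ — a factor $2$ loss in the exponent — and the extra square roots everywhere. I expect the cleanest route is not to plug $\sqrt t$ into the black-box statement of Lemma~\ref{Lemma:First} but to re-run its proof (via~(\ref{eq:Appendix1}) and the definition of $L_t$) with the choice $t_0=t^{1/4}$, since the definition of $L_t$ combined with the Cauchy--Schwarz step above yields $\sum_{j\in L_t}\sum_{|T|=d-1}\hat f(T,j)\hat g(T,j)\geq t\sum_{j\in L_t}I'_j(f)I'_j(g)$ while the analytic upper bound on this bilinear sum, obtained by Cauchy--Schwarz over $j$ and then applying the $(\ref{eq:15})$-type estimate to $f$ and to $g$ separately, is $\sqrt{t}\sum_{j\in L_t}I'_j(f)I'_j(g)+ (\text{error})$; absorbing the $\sqrt t$ term into the left side leaves $(\text{error})/( t-\sqrt t)$, and tracking the error through the square roots is where the factor $\tfrac14$ and the $2(5e)^{1/2}(B(p))^{1/2}$ constant emerge. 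This is entirely mechanical once the set inclusion and the two Cauchy--Schwarz reductions are set up; there is no conceptual difficulty beyond careful constant-chasing.
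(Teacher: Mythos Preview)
Your overall strategy --- Cauchy--Schwarz in $T$ to get a set inclusion, then Cauchy--Schwarz in $j$ combined with the normalization~(\ref{eq:Appendix3}), then invoke Lemma~\ref{Lemma:First} and take a square root --- is exactly the paper's proof. However, you make an algebraic slip in the very first step that causes all the later ``bookkeeping'' difficulties you describe.

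From Cauchy--Schwarz you have, for $j\in L_t$,
\[
\Bigl(\textstyle\sum_T \hat f(T,j)^2\Bigr)^{1/2}\Bigl(\textstyle\sum_T \hat g(T,j)^2\Bigr)^{1/2} > t\, I'_j(f)\,I'_j(g).
\]
If $\sum_T \hat f(T,j)^2 \leq t\, I'_j(f)^2$ \emph{and} $\sum_T \hat g(T,j)^2 \leq t\, I'_j(g)^2$, then the left side is at most $\sqrt{t}\,I'_j(f)\cdot\sqrt{t}\,I'_j(g)=t\,I'_j(f)I'_j(g)$, a contradiction. Hence the correct inclusion is
\[
L_t \subseteq L^f_t \cup L^g_t,
\]
with threshold $t$, not $\sqrt t$. (You effectively forgot to square when passing from ``one factor exceeds $\sqrt t$ times its influence'' to the condition on the sum of squares.) With this correction, Lemma~\ref{Lemma:First} applies directly at threshold $t$ under the stated hypothesis $t\geq 4(4B(p)e)^{d-1}$, and taking the square root of its conclusion yields~(\ref{eq:Appendix4}) exactly --- no need to re-run the proof with a different $t_0$, and no discrepancy between $\sqrt t/4$ and $t/4$ to explain away. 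Your proposed fix of ``replacing $\sqrt t/4$ by $t/4$'' would in fact have been illegitimate, since that would assert a strictly stronger bound than what your $\sqrt t$-route proves.
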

\begin{proof}
Define the sets $L^f_t$ and $L^g_t$ as in Lemma~\ref{Lemma:First}
above.
%\[
%L^f_t=\{j \in J: \sum_{\stackrel{T \subset I}{ |T|=d-1}} \hat f (T,j)^2
%\geq 4t^2 \cdot I'_j(f)^2\},
%\]
%, and
%\[
%L^g_t=\{j \in J: \sum_{\stackrel{T \subset I}{ |T|=d-1}} \hat g (T,j)^2
%\geq 4t^2 \cdot I'_j(g)^2\}.
%\]
By the Cauchy-Schwarz inequality and
Assumption~(\ref{eq:Appendix3}),
\[
\sum_{j \in L^f_t} I'_j(f) I'_j(g) \leq \Parenth{\sum_{j \in
L^f_t} I'_j(f)^2}^{1/2} \Parenth{\sum_{j \in L^f_t}
I'_j(g)^2}^{1/2} \leq \Parenth{\sum_{j \in L^f_t}
I'_j(f)^2}^{1/2},
\]
and thus by Lemma~\ref{Lemma:First},
\[
\sum_{j \in L^f_t} I'_j(f) I'_j(g) \leq (5e)^{1/2} \cdot
(t/4)^{-\frac{1}{2(d-1)}}  \cdot (B(p))^{1/2}
\cdot\exp\Parenth{-\frac{(d-1)}{4B(p)\cdot
      e}\cdot (t/4)^{1/(d-1)}}.
\]
The same holds for $\sum_{j \in L^g_t} I'_j(f) I'_j(g)$. Now we
note that by the Cauchy-Schwarz inequality, $L_t \subseteq L^f_t
\cup L^g_t$, and thus,
\begin{align*}
\sum_{j \in L_t} I'_j(f) I'_j(g) &\leq \sum_{j \in L^f_t} I'_j(f)
I'_j(g) +
\sum_{j \in L^g_t} I'_j(f) I'_j(g) \\
&\leq 2 \cdot (5e)^{1/2} \cdot (t/4)^{-\frac{1}{2(d-1)}}  \cdot
(B(p))^{1/2} \cdot\exp\Parenth{-\frac{(d-1)}{4B(p)\cdot
      e}\cdot (t/4)^{1/(d-1)}},
\end{align*}
as asserted.
\end{proof}

\medskip

\noindent Now we are ready to present the decoupled version of
Lemma~\ref{Lemma-biased-main}.
\begin{theorem}\label{Thm:Decoupled}
For all $d \geq 2$, and for any functions $f,g:\{0,1\}^n
\rightarrow \{0,1\}$ such that:
\begin{equation}
\sum_{j=1}^n I'_j(f)^2 \leq 1, \quad \qquad \sum_{j=1}^n I'_j(g)^2
\leq 1, \qquad \mbox{and} \qquad \sum_{j=1}^n I'_j(f)I'_j(g) \leq
\exp(-2(d-1)),\qquad \label{eq:Appendix5}
\end{equation}
we have
\begin{align}\label{eq:Appendix6}
\sum_{|S|=d} \hat f(S) \hat g(S) \leq \frac{70e}d \cdot
\Parenth{\frac{4B(p)\cdot
      e}{d-1}}^{d-1}\cdot\Parenth{\sum_{j=1}^n I'_j(f) I'_j(g) }\cdot
      \Parenth{\log\Parenth{\frac{d}{{\sum}_{\substack{j\leq
      n}}I'_j(f)I'_j(g)}}}^{d-1}.
\end{align}
\end{theorem}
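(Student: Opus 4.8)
**The plan is to mimic the proof of Lemma~\ref{Lemma-biased-main}, but use the decoupled level-set estimate of Lemma~\ref{Lemma:First-Decoupled} in place of the simple two-term split.** First I would fix a partition $\{I,J\}$ of $\{1,\ldots,n\}$ and aim to prove the per-partition inequality
\begin{equation}\label{eq:plan-per-partition}
\sum_{\substack{T\subset I\\ |T|=d-1}}\sum_{j\in J}\hat f(T,j)\hat g(T,j)
\leq 14\cdot\Parenth{\frac{4B(p)e}{d-1}}^{d-1}\cdot\Parenth{\sum_{j\in J}I'_j(f)I'_j(g)}\cdot\Parenth{\log\Parenth{\frac{1}{\sum_{j\in J}I'_j(f)I'_j(g)}}}^{d-1},
\end{equation}
and then recover Theorem~\ref{Thm:Decoupled} by averaging over a random $J$ that contains each coordinate independently with probability $1/d$, exactly as in the final paragraph of the proof of Lemma~\ref{Lemma-biased-main}. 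The factor $((d-1)/d)^{d-1}>1/e$ survival probability produces the extra $e$ and the $d$ in the denominator of the $\log$; concavity of $x\mapsto x\log(1/x)^{d-1}$ on $[0,\exp(-(d-1))]$ (guaranteed by the third hypothesis in~\eqref{eq:Appendix5}, applied with $x=\sum_{j\in J}I'_j(f)I'_j(g)$) lets the average pass inside; and $5e\cdot e\cdot 2\cdot(\text{absolute constant})$ comfortably fits under $70e$.

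For~\eqref{eq:plan-per-partition}, I would partition $J$ into level sets according to the ``local'' contribution. Set $a_j=\sum_{T\subset I,|T|=d-1}\hat f(T,j)\hat g(T,j)$ and $b_j=I'_j(f)I'_j(g)$. For a threshold $t$, Lemma~\ref{Lemma:First-Decoupled} says that on $L_t=\{j:a_j>t\,b_j\}$ we have $\sum_{j\in L_t}b_j$ decaying like $(t/4)^{-1/(2(d-1))}\exp(-\frac{d-1}{4B(p)e}(t/4)^{1/(d-1)})$. Write $\sum_j a_j \leq \sum_j \min(a_j, t_0 b_j) + \sum_{j:a_j>t_0 b_j} a_j$ — or, more cleanly, integrate: $\sum_j a_j \le t_0\sum_j b_j + \int_{t_0}^\infty (\sum_{j\in L_t} b_j)\,dt$, since $a_j = \int_0^{\infty} \indicator_{\set{a_j>t b_j}}\, b_j\, dt$ when $a_j\ge 0$ (and the negative-$a_j$ terms only help). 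The tail integral $\int_{t_0}^\infty (t/4)^{-1/(2(d-1))}\exp(-\frac{d-1}{4B(p)e}(t/4)^{1/(d-1)})\,dt$ is of exactly the type controlled by a Beckner-style estimate analogous to Lemma~\ref{lemma:beckner-integral} — the integrand is eventually rapidly decreasing, so the integral is bounded by a constant times $t_0$ times the integrand at $t_0$. Then choose $t_0$ so that $\exp(-\frac{d-1}{4B(p)e}(t_0/4)^{1/(d-1)})=\sum_{j\in J}b_j$, i.e.
\[
t_0 = 4\cdot\Parenth{\frac{4B(p)e}{d-1}}^{d-1}\cdot\Parenth{\log\Parenth{\frac{1}{\sum_{j\in J}b_j}}}^{d-1},
\]
which by the hypothesis $\sum_j b_j\le\exp(-2(d-1))$ satisfies $t_0\ge 4(4B(p)e)^{d-1}$, the range required by Lemma~\ref{Lemma:First-Decoupled}. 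Substituting gives $\sum_j a_j \le (\text{const})\cdot t_0\cdot\sum_{j\in J}b_j$, which is~\eqref{eq:plan-per-partition} after bookkeeping the constants.

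Finally I would check that $\sum_j a_j$ really equals the LHS of~\eqref{eq:Appendix6} after summing the per-partition bound and that the nonnegativity manipulation ($a_j=\int_0^\infty\indicator_{\{a_j>tb_j\}}b_j\,dt$, valid only for $a_j\ge0$) is legitimate — for $j$ with $a_j<0$ the integral is $0\le a_j$ would fail, so instead bound $\sum_j a_j\le\sum_{j:a_j>0}a_j$ first and apply the argument only to those $j$; this is harmless since $L_t$ only ever contains indices with $a_j>t b_j\ge 0$. \textbf{The main obstacle I anticipate is the decoupled Beckner-type integral bound}: Lemma~\ref{lemma:beckner-integral} is stated for the exponent $-\frac{d-1}{2B(p)e}t^{2/(d-1)}$ arising from $\|f_j\|_2$-normalization, whereas the decoupled level-set estimate carries the halved exponent $-\frac{d-1}{4B(p)e}(t/4)^{1/(d-1)}$ and a different power of $t$ in front, so I would need to re-run the change-of-variables/monotone-ratio argument of that lemma's proof with the new constants, verifying that the geometric-series comparison still goes through for $t_0$ in the stated range. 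Everything else is a direct transcription of the monotone-case proof, with Cauchy-Schwarz replacing Parseval at the one point where $\sum_{j\in J}\widehat{f_x}(\{j\})^2\le1$ was used — here one uses $\sum_{j\in J}\widehat{(f_x)}(\{j\})\widehat{(g_x)}(\{j\})\le(\sum_j\widehat{(f_x)}(\{j\})^2)^{1/2}(\sum_j\widehat{(g_x)}(\{j\})^2)^{1/2}\le1$.
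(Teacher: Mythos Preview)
Your plan is essentially identical to the paper's own proof: fix a partition $\{I,J\}$, write $\sum_j a_j$ via the layer-cake integral $\int_0^\infty \sum_{j\in L_t} b_j\,dt$ (the paper does this by invoking Lemma~\ref{lemma:simple} with $f_1=b_j$, $f_2=a_j/b_j$), split at a threshold, apply Lemma~\ref{Lemma:First-Decoupled} to the tail, re-run the change-of-variables/geometric-ratio argument of Lemma~\ref{lemma:beckner-integral} with the halved exponent (exactly the obstacle you flagged), choose the threshold so the exponential equals $\sum_{j\in J}b_j$, and finish by averaging over random $J$ using concavity. Your handling of the $a_j<0$ indices and the anticipated integral re-derivation are both correct; the only discrepancy is cosmetic (your per-partition constant $14$ versus the paper's $70$), and your closing remark about Cauchy--Schwarz replacing Parseval is superfluous here since that step is already absorbed into Lemma~\ref{Lemma:First-Decoupled}.
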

\begin{proof}
As in the proof of Lemma~\ref{Lemma-biased-main}, we first
consider a partition $\{I,J\}$ of $\{1,\ldots,n\}$, and prove that
\begin{align}\label{eq:Appendix7}
  \begin{split}
\sum_{j \in J} \sum_{\stackrel{T \subset I}{ |T|=d-1}} \hat f(T,j)
\hat g(T,j) \leq 70 \cdot \Parenth{\frac{4B(p)\cdot
      e}{d-1}}^{d-1}\cdot\Parenth{\sum_{j\in J} I'_j(f) I'_j(g) }\cdot & \\
      \cdot
      \Parenth{\log\Parenth{\frac{1}{\sum_{j\in J}I'_j(f)I'_j(g)}}}^{d-1}
      &.
  \end{split}
\end{align}
We apply Lemma~\ref{lemma:simple} with $\Omega=J$ endowed with the
uniform measure, and the functions
\[
f_1(j) = I'_j(f) I'_j(g), \qquad \mbox{ and } \qquad f_2(j) =
\frac{\sum_{T \subset I, |T|=d-1} \hat f (T,j) \hat g
(T,j)}{I'_j(f) I'_j(g)}.
\]
Noting that the set $L(t)=\{j:f_2(j) > t\}$ is exactly the set
$L_t$ defined in Lemma~\ref{Lemma:First-Decoupled}, we get
\begin{align}\label{eq:Appendix8}
\sum_{j\in J}\sum_{\stackrel{T \subset I}{ |T|=d-1}} \hat{f}(T,j)
\hat{g}(T,j) = \int_{t=0}^{\infty} \Parenth{\sum_{j \in L_t}
I'_j(f)I'_j(g)} dt.
\end{align}
Therefore, using Lemma~\ref{Lemma:First-Decoupled}, we have, for
all $t_1 \geq 4 \cdot \Parenth{4B(p)\cdot e}^{d-1}$,
\begin{align*}
(\ref{eq:Appendix8}) = \int_{t=0}^{t_1} \Parenth{\sum_{j \in L_t}
I'_j(f)I'_j(g)} dt + \int_{t=t_1}^{\infty} \Parenth{\sum_{j \in
L_t} I'_j(f)I'_j(g)} dt \leq t_1 \sum_{j \in J} I'_j(f)I'_j(g) +
\\
+ \int_{t=t_1}^{\infty} \Parenth{2 \cdot (5e)^{1/2} \cdot
(t/4)^{-\frac{1}{2(d-1)}}  \cdot (B(p))^{1/2}
\cdot\exp\Parenth{-\frac{(d-1)}{4B(p)\cdot
      e}\cdot (t/4)^{1/(d-1)}}} dt.
\end{align*}

\remove{ In order to bound the integral from above, we apply a
change of variables
\[
s= \frac{(d-1)}{4B(p)\cdot e}\cdot (t/4)^{1/(d-1)}.
\]
We get, for $s_1= \frac{(d-1)}{4B(p)\cdot e}\cdot
(t_1/4)^{1/(d-1)}$, that
\begin{align*}
 \int_{t=t_1}^{\infty} \Parenth{2 \cdot (5e)^{1/2} \cdot
(t/4)^{-\frac{1}{2(d-1)}}  \cdot (B(p))^{1/2}
\cdot\exp\Parenth{-\frac{(d-1)}{4B(p)\cdot
      e}\cdot (t/4)^{1/(d-1)}}} dt \\
=2 \cdot (5e)^{1/2} \cdot (B(p))^{1/2} \int_{s=s_1}^{\infty}
\Parenth{(\frac{4B(p)\cdot e}{d-1})^{-1/2} s^{-1/2} e^{-s} (16B(p)
\cdot e) (\frac{4B(p)\cdot e}{d-1})^{d-2} s^{d-2}} ds \\
= 2 \cdot (5e)^{1/2} \cdot (B(p))^{1/2} \cdot (16B(p) \cdot e)
\cdot \parenth{\frac{4B(p)\cdot e}{d-1}}^{(d-2.5)}
\int_{s=s_1}^{\infty} \Parenth{s^{d-2.5} e^{-s}} ds.
\end{align*}
Denoting the integrand by $\varphi(s)$, we note that for $t_1 \geq
4 \cdot \Parenth{8B(p)\cdot e}^{d-1}$ we have
\[
\frac{\varphi(s+1)}{\varphi(s)} = e^{-1} \cdot
\parenth{\frac{s+1}{s}}^{d-2.5} \leq e^{-1} \cdot e^{(d-2.5)/s}
\leq e^{-1/2}
\]
for all $s \geq s_1$, and thus the integral is at most
\[
s_1^{d-2.5} e^{-s_1} \cdot \frac{1}{1-e^{-1/2}} \leq 2.55 \cdot
s_1^{d-2.5} e^{-s_1}.
\]
Therefore,
\begin{align*}
(\ref{eq:Appendix8}) \leq t_1 \sum_{j \in J} I'_j(f)I'_j(g) +
18.74 \cdot (B(p))^{1/2} \cdot (16B(p)e) \Parenth{\frac{4B(p)\cdot
e}{d-1}}^{(d-2.5)} \cdot \Parenth{\frac{(d-1)}{4B(p)\cdot
e}}^{d-2.5}
\\
\cdot (t_1/4)^{(2d-5)/(2d-2)} \cdot \exp
\Parenth{\frac{(d-1)}{4B(p)\cdot e}\cdot (t_1/4)^{1/(d-1)}} \\
= t_1 \sum_{j \in J} I'_j(f)I'_j(g) + 18.74 \cdot (B(p))^{1/2}
\cdot (16B(p)e) \cdot (t_1/4)^{1-\frac{3}{2d-2}} \exp
\Parenth{-\frac{(d-1)}{4B(p)\cdot e}\cdot (t_1/4)^{1/(d-1)}}.
\end{align*}
}

\noindent We evaluate the integral in the same way as the integral
in Lemma~\ref{lemma:beckner-integral}, and obtain that for all
$t_1 \geq 4 \cdot (8B(p) \cdot e)^{d-1}$,
\begin{align}\label{eq:Appendix9}
(\ref{eq:Appendix8}) \leq t_1 \sum_{j \in J} I'_j(f)I'_j(g) +
18.74 \cdot (B(p))^{1/2} \cdot (16B(p)e) \cdot
(t_1/4)^{1-\frac{3}{2d-2}} \cdot \exp
\Parenth{-\frac{(d-1)}{4B(p)\cdot e}\cdot (t_1/4)^{1/(d-1)}}.
\end{align}
We then choose $t_1$ such that
\[
\exp \Parenth{-\frac{(d-1)}{4B(p)\cdot e}\cdot (t_1/4)^{1/(d-1)}}
= \sum_{j \in J} I'_j(f)I'_j(g).
\]
(Note that due to Assumption~(\ref{eq:Appendix5}), we have $t_1
\geq 4 \cdot (8B(p) \cdot e)^{d-1}$, as required). Substituting
$t_1$ into Inequality~(\ref{eq:Appendix9}), we get
\begin{align}
(\ref{eq:Appendix8}) \leq 70 \cdot
\Parenth{\frac{4B(p)e}{d-1}}^{d-1} \Parenth{\sum_{j \in J}
I'_j(f)I'_j(g)} \Parenth{\log \frac{1}{\sum_{j \in J}
I'_j(f)I'_j(g)}}^{d-1},
\end{align}
proving~(\ref{eq:Appendix7}). Finally, the derivation
of~(\ref{eq:Appendix6}) from~(\ref{eq:Appendix7}) is exactly the
same as the last step in the proof of
Lemma~\ref{Lemma-biased-main}.
\end{proof}

\remove{In this appendix we prove a decoupled variant of
Lemma~\ref{Lemma-Main}. This variant generalizes Theorem~2.4
of~\cite{Talagrand:96}, that was used by Talagrand to establish a
lower bound on the correlation between monotone subsets of the
discrete cube. While employing the same basic idea, our proof is
shorter than Talagrand's proof, and applies to general functions
on the discrete cube endowed with any product measure.

\medskip

\noindent We start with a  generalization of Lemma~3.1
in~\cite{Talagrand:96}. The proof is a slight modification of the
proof of Lemma~\ref{Lemma-Main}.
\begin{lemma}
Let $f:\{0,1\}^n \rightarrow [-1,1]$ be a function, and let
$\{I,J\}$ be a partition of $\{1,\ldots,n\}$. For $t>0$, denote
\[
L^f_t=\{j \in J: \sum_{\stackrel{T \subset I}{ |T|=d-1}} \hat f (T,j)^2
> t \cdot I_j(f)^2\}.
\]
Then for all $t \geq 4 \cdot \Parenth{4B(p)\cdot e}^{d-1}$,
\begin{equation}
\sum_{j \in L^f_t} I_j(f)^2 \leq 5e \cdot (t/4)^{-\frac{1}{d-1}}  \cdot B(p) \cdot\exp\Parenth{-\frac{(d-1)}{2B(p)\cdot
      e}\cdot (t/4)^{1/(d-1)}}.
\end{equation}
\label{Lemma:First}
\end{lemma}
\begin{proof}
Exactly the same argument as used in the proof of Lemma~\ref{Lemma-Main} (see Equation~\ref{eq:15}) shows that for all $t_0 \geq \Parenth{4B(p)\cdot e}^{(d-1)/2}$,
\begin{align}
  \label{eq:Appendix1}
  \sum_{j\in L^f_t}\sum_{\stackrel{T \subset I}{ |T|=d-1}} \hat{f}(T,j)^2 \leq &
  2t_0^2\cdot \sum_{j\in L^f_t}{I_j(f)}^2
   + 10e \cdot t_0^{2-\frac{2}{d-1}}  \cdot B(p) \cdot\exp\Parenth{-\frac{(d-1)}{2B(p)\cdot
      e}\cdot t_0^{2/(d-1)}}.
\end{align}
On the other hand, by the definition of $L^f_t$ we have
\begin{align}
\label{eq:Appendix2}
\sum_{j\in L^f_t}\sum_{\stackrel{T \subset I}{ |T|=d-1}} \hat{f}(T,j)^2 \geq \sum_{j\in L^f_t} t \cdot I_j(f)^2 = t \cdot \sum_{j\in L^f_t} I_j(f)^2.
\end{align}
Taking $t_0=\sqrt{t}/2$ and combining Inequalities~(\ref{eq:Appendix1}) and~(\ref{eq:Appendix2}), we get
\begin{align}
t \cdot \sum_{j\in L^f_t} I_j(f)^2 \leq (t/2) \cdot \sum_{j\in L^f_t}{I_j(f)}^2
   + 10e \cdot (t/4)^{1-\frac{1}{d-1}}  \cdot B(p) \cdot\exp\Parenth{-\frac{(d-1)}{2B(p)\cdot
      e}\cdot (t/4)^{1/(d-1)}},
\end{align}
and simplification yields the assertion.
\end{proof}

\medskip

\noindent We now present a decoupled variant of Lemma~\ref{Lemma:First}.
The proof is a series of applications of the Cauchy-Schwarz inequality.
\begin{lemma}
\label{Lemma:First-Decoupled}
Let $f,g:\{0,1\}^n \rightarrow [-1,1]$ be functions, such that
\begin{equation}\label{eq:Appendix3}
\sum_{j=1}^n I_j(f)^2 \leq 1, \qquad \mbox{ and } \qquad
\sum_{j=1}^n I_j(g)^2 \leq 1,
\end{equation}
and let $\{I,J\}$ be a partition of $\{1,\ldots,n\}$. For $t>0$, denote
\footnote{Is this ok that we use the name $L_t$? It was used before
in Lemma~\ref{lemma:simple}, but we will use it to the ``same'' purpose.}
\[
L_t=\{j \in J: \sum_{\stackrel{T \subset I}{ |T|=d-1}} \hat f (T,j) \hat g (T,j) > t \cdot I_j(f) I_j(g)\}.
\]
Then for all $t \geq 4 \cdot \Parenth{4B(p)\cdot e}^{d-1}$,
\begin{equation}\label{eq:Appendix4}
\sum_{j \in L_t} I_j(f) I_j(g) \leq 2 \cdot (5e)^{1/2} \cdot (t/4)^{-\frac{1}{2(d-1)}}  \cdot (B(p))^{1/2} \cdot\exp\Parenth{-\frac{(d-1)}{4B(p)\cdot
      e}\cdot (t/4)^{1/(d-1)}}.
\end{equation}
\end{lemma}
\begin{proof}
Define the sets $L^f_t$ and $L^g_t$ as in Lemma~\ref{Lemma:First} above.
%\[
%L^f_t=\{j \in J: \sum_{\stackrel{T \subset I}{ |T|=d-1}} \hat f (T,j)^2
%\geq 4t^2 \cdot I_j(f)^2\},
%\]
%, and
%\[
%L^g_t=\{j \in J: \sum_{\stackrel{T \subset I}{ |T|=d-1}} \hat g (T,j)^2
%\geq 4t^2 \cdot I_j(g)^2\}.
%\]
By the Cauchy-Schwarz inequality and Assumption~(\ref{eq:Appendix3}),
\[
\sum_{j \in L^f_t} I_j(f) I_j(g) \leq \Parenth{\sum_{j \in L^f_t} I_j(f)^2}^{1/2} \Parenth{\sum_{j \in L^f_t} I_j(g)^2}^{1/2} \leq
\Parenth{\sum_{j \in L^f_t} I_j(f)^2}^{1/2},
\]
and thus by Lemma~\ref{Lemma:First},
\[
\sum_{j \in L^f_t} I_j(f) I_j(g) \leq (5e)^{1/2} \cdot (t/4)^{-\frac{1}{2(d-1)}}  \cdot (B(p))^{1/2} \cdot\exp\Parenth{-\frac{(d-1)}{4B(p)\cdot
      e}\cdot (t/4)^{1/(d-1)}}.
\]
The same holds for $\sum_{j \in L^g_t} I_j(f) I_j(g)$. Now we note that
by the Cauchy-Schwarz inequality, $L_t \subseteq L^f_t \cup L^g_t$, and
thus,
\begin{align*}
\sum_{j \in L_t} I_j(f) I_j(g) \leq \sum_{j \in L^f_t} I_j(f) I_j(g) +
\sum_{j \in L^g_t} I_j(f) I_j(g) \\
\leq 2 \cdot (5e)^{1/2} \cdot (t/4)^{-\frac{1}{2(d-1)}}  \cdot (B(p))^{1/2} \cdot\exp\Parenth{-\frac{(d-1)}{4B(p)\cdot
      e}\cdot (t/4)^{1/(d-1)}},
\end{align*}
as asserted.
\end{proof}

\medskip

\noindent Now we are ready to present the decoupled version of
Lemma~\ref{Lemma-Main}.
\begin{theorem}\label{Thm:Decoupled}
For all $d \geq 2$, and for any functions $f,g:\{0,1\}^n
\rightarrow [-1,1]$ such that
\begin{equation}
\sum_{j=1}^n I_j(f)^2 \leq 1, \quad \qquad \sum_{j=1}^n I_j(g)^2 \leq 1,
\qquad \mbox{and} \qquad \sum_{j=1}^n I_j(f)I_{j}(g) \leq \exp(-2(d-1)),\qquad
\label{eq:Appendix5}
\end{equation}
we have
\begin{align}\label{eq:Appendix6}
\sum_{|S|=d} \hat f(S) \hat g(S) \leq \frac{70e}d \cdot
\Parenth{\frac{4B(p)\cdot
      e}{d-1}}^{d-1}\cdot\Parenth{\sum_{j\leq n}I_j(f) I_j(g) }\cdot
      \Parenth{\log\Parenth{\frac{d}{{\sum}_{\substack{j\leq n}}I_j(f)I_j(g)}}}^{d-1},
\end{align}
where the Fourier coefficients are with respect to the measure
$\mu=\mu_p^{\otimes n}$, and
\[
B(p)=\frac{(1-p)-p}{2p(1-p) (\ln(1-p)-\ln p)}
\]
is the optimal biased hypercontractive constant~\cite{Oles}.
\end{theorem}
\begin{proof}
As in the proof of Lemma~\ref{Lemma-Main}, we first consider a
partition $\{I,J\}$ of $\{1,\ldots,n\}$, and prove that
\begin{align}\label{eq:Appendix7}
  \begin{split}
\sum_{j \in J} \sum_{\stackrel{T \subset I}{ |T|=d-1}} \hat f(T,j)
\hat g(T,j) \leq 70 \cdot \Parenth{\frac{4B(p)\cdot
      e}{d-1}}^{d-1}\cdot\Parenth{\sum_{j\in
      \set{1,\ldots,n}}I_j(f) I_j(g) }\cdot & \\ \cdot
      \Parenth{\log\Parenth{\frac{1}{\sum_{j\in\set{1,\ldots,n}}I_j(f)I_j(g)}}}^{d-1}
      &.
  \end{split}
\end{align}
We apply Lemma~\ref{lemma:simple} with $\Omega=J$ endowed with the
uniform measure, and the functions
\[
f_1(j) = I_j(f) I_j(g), \qquad \mbox{ and } \qquad f_2(j) =
\frac{\sum_{T \subset I, |T|=d-1} \hat f (T,j)
\hat g (T,j)}{I_j(f) I_j(g)}.
\]
Noting that the set $L(t)=\{j:f_2(j) > t\}$ is exactly the set $L_t$ defined
in Lemma~\ref{Lemma:First-Decoupled}, we get
\begin{align}\label{eq:Appendix8}
\sum_{j\in J}\sum_{\stackrel{T \subset I}{ |T|=d-1}} \hat{f}(T,j)
\hat{g}(T,j) = \int_{t=0}^{\infty} \Parenth{\sum_{j \in L_t}
I_j(f)I_j(g)} dt.
\end{align}
Therefore, using Lemma~\ref{Lemma:First-Decoupled}, we have, for
all $t_1 \geq 4 \cdot \Parenth{4B(p)\cdot e}^{d-1}$,
\begin{align*}
(\ref{eq:Appendix8}) = \int_{t=0}^{t_1} \Parenth{\sum_{j \in L_t}
I_j(f)I_j(g)} dt + \int_{t=t_1}^{\infty} \Parenth{\sum_{j \in L_t}
I_j(f)I_j(g)} dt \leq t_1 \sum_{j \in J} I_j(f)I_j(g) +
\\
+ \int_{t=t_1}^{\infty} \Parenth{2 \cdot (5e)^{1/2} \cdot
(t/4)^{-\frac{1}{2(d-1)}}  \cdot (B(p))^{1/2}
\cdot\exp\Parenth{-\frac{(d-1)}{4B(p)\cdot
      e}\cdot (t/4)^{1/(d-1)}}} dt.
\end{align*}

\remove{ In order to bound the integral from above, we apply a
change of variables
\[
s= \frac{(d-1)}{4B(p)\cdot e}\cdot (t/4)^{1/(d-1)}.
\]
We get, for $s_1= \frac{(d-1)}{4B(p)\cdot e}\cdot
(t_1/4)^{1/(d-1)}$, that
\begin{align*}
 \int_{t=t_1}^{\infty} \Parenth{2 \cdot (5e)^{1/2} \cdot
(t/4)^{-\frac{1}{2(d-1)}}  \cdot (B(p))^{1/2}
\cdot\exp\Parenth{-\frac{(d-1)}{4B(p)\cdot
      e}\cdot (t/4)^{1/(d-1)}}} dt \\
=2 \cdot (5e)^{1/2} \cdot (B(p))^{1/2} \int_{s=s_1}^{\infty}
\Parenth{(\frac{4B(p)\cdot e}{d-1})^{-1/2} s^{-1/2} e^{-s} (16B(p)
\cdot e) (\frac{4B(p)\cdot e}{d-1})^{d-2} s^{d-2}} ds \\
= 2 \cdot (5e)^{1/2} \cdot (B(p))^{1/2} \cdot (16B(p) \cdot e)
\cdot \parenth{\frac{4B(p)\cdot e}{d-1}}^{(d-2.5)}
\int_{s=s_1}^{\infty} \Parenth{s^{d-2.5} e^{-s}} ds.
\end{align*}
Denoting the integrand by $\varphi(s)$, we note that for $t_1 \geq
4 \cdot \Parenth{8B(p)\cdot e}^{d-1}$ we have
\[
\frac{\varphi(s+1)}{\varphi(s)} = e^{-1} \cdot
\parenth{\frac{s+1}{s}}^{d-2.5} \leq e^{-1} \cdot e^{(d-2.5)/s}
\leq e^{-1/2}
\]
for all $s \geq s_1$, and thus the integral is at most
\[
s_1^{d-2.5} e^{-s_1} \cdot \frac{1}{1-e^{-1/2}} \leq 2.55 \cdot
s_1^{d-2.5} e^{-s_1}.
\]
Therefore,
\begin{align*}
(\ref{eq:Appendix8}) \leq t_1 \sum_{j \in J} I_j(f)I_j(g) + 18.74
\cdot (B(p))^{1/2} \cdot (16B(p)e) \Parenth{\frac{4B(p)\cdot
e}{d-1}}^{(d-2.5)} \cdot \Parenth{\frac{(d-1)}{4B(p)\cdot
e}}^{d-2.5}
\\
\cdot (t_1/4)^{(2d-5)/(2d-2)} \cdot \exp
\Parenth{\frac{(d-1)}{4B(p)\cdot e}\cdot (t_1/4)^{1/(d-1)}} \\
= t_1 \sum_{j \in J} I_j(f)I_j(g) + 18.74 \cdot (B(p))^{1/2} \cdot
(16B(p)e) \cdot (t_1/4)^{1-\frac{3}{2d-2}} \exp
\Parenth{-\frac{(d-1)}{4B(p)\cdot e}\cdot (t_1/4)^{1/(d-1)}}.
\end{align*}
}

\noindent We evaluate the integral in the same way as the integral
in Lemma~\ref{lemma:beckner-integral}, and obtain that for all
$t_1 \geq 4 \cdot (8B(p) \cdot e)^{d-1}$,
\begin{align}\label{eq:Appendix9}
(\ref{eq:Appendix8}) \leq t_1 \sum_{j \in J} I_j(f)I_j(g) + 18.74
\cdot (B(p))^{1/2} \cdot (16B(p)e) \cdot
(t_1/4)^{1-\frac{3}{2d-2}} \cdot \exp
\Parenth{-\frac{(d-1)}{4B(p)\cdot e}\cdot (t_1/4)^{1/(d-1)}}.
\end{align}
We then choose $t_1$ such that
\[
\exp \Parenth{-\frac{(d-1)}{4B(p)\cdot e}\cdot (t_1/4)^{1/(d-1)}}
= \sum_{j \in J} I_j(f)I_j(g).
\]
(Note that due to Assumption~(\ref{eq:Appendix5}), we have $t_1
\geq 4 \cdot (8B(p) \cdot e)^{d-1}$, as required). Substituting
$t_1$ into Inequality~(\ref{eq:Appendix9}), we get
\begin{align}
(\ref{eq:Appendix8}) \leq 70 \cdot
\Parenth{\frac{4B(p)e}{d-1}}^{d-1} \Parenth{\sum_{j \in J}
I_j(f)I_j(g)} \Parenth{\log \frac{1}{\sum_{j \in J}
I_j(f)I_j(g)}}^{d-1},
\end{align}
proving~(\ref{eq:Appendix7}). Finally, the derivation
of~(\ref{eq:Appendix6}) from~(\ref{eq:Appendix7}) is exactly the
same as the last step in the proof of Lemma~\ref{Lemma-Main}.
\end{proof}

}

\end{document}